\documentclass[11pt,reqno]{amsart}
\usepackage{fullpage}
\usepackage{comment}
\usepackage{footmisc}
\usepackage{amssymb}
\usepackage{tikz-cd}
\usepackage{tikz}
\usepackage{amsthm}
\usepackage{lscape}
\usepackage{array}
\usepackage{colonequals}
\usepackage{graphicx}
\usepackage{caption}
\usepackage{subcaption}
\usepackage[pagebackref=true, colorlinks = true, allcolors=magenta]{hyperref}
\usepackage{cleveref} 
\usepackage{longtable}
\usepackage{booktabs}
\usepackage{float}
\usepackage{makecell}
\usepackage{array}
\usepackage{footnote}

\renewcommand*{\backrefalt}[4]{%
  \ifcase #1 %
    \relax
  \or
    $\uparrow$#2.%
  \else
    $\uparrow$#2.%
  \fi%
}

\usepackage{todonotes}
\makeatletter
\providecommand\@dotsep{5}
\renewcommand{\listoftodos}[1][\@todonotes@todolistname]{%
	\@starttoc{tdo}{#1}}
\makeatother

\DeclareMathOperator{\rk}{rk}

\DeclareMathOperator{\PSL}{PSL}
\DeclareMathOperator{\GL}{GL}

\DeclareMathOperator{\gon}{gon}

\newcommand{\DeltaGrp}{\Delta}

\DeclareMathOperator{\Pic}{Pic}

\theoremstyle{plain}
\theoremstyle{plain}
\theoremstyle{plain}
\theoremstyle{plain}
\theoremstyle{plain}
\theoremstyle{plain}

\newtheorem{theorem}{Theorem}[section]
\newtheorem*{theorem*}{Theorem}
\newtheorem{lemma}[theorem]{Lemma}

\newtheorem{proposition}[theorem]{Proposition}
\newtheorem*{proposition*}{Proposition}
\newtheorem{corollary}[theorem]{Corollary}
\newtheorem*{corollary*}{Corollary}

\theoremstyle{definition}
\newtheorem{remark}[theorem]{Remark}

\numberwithin{equation}{section}
\newcommand{\Phiinf}{\Phi^{\infty}}

\newcommand{\Q}{\mathbb{Q}}
\newcommand{\Z}{\mathbb{Z}}

\newcommand{\C}{\mathbb{C}}
\newcommand{\F}{\mathbb{F}}
\newcommand{\PP}{\mathbb{P}}

\newcommand{\Qbar}{\overline \Q}

\newcommand{\fp}{\mathfrak{p}}

\newcommand{\tor}{\mathrm{tors}}

\def\tg#1#2{\mathbb Z/#1\mathbb Z \times \mathbb Z /#2 \mathbb Z}
\newcommand{\diamondop}[1]{\langle #1 \rangle} 
\newcommand{\set}[1]{\left\lbrace #1 \right\rbrace}

\newcommand{\XMN}[2]{X_1(#1,#2)}
\newcommand{\XN}[1]{X_1(#1)}
\newcommand{\JMN}[2]{J_1(#1,#2)}
\newcommand{\JN}[1]{J_1(#1)}
\newcommand{\Kz}[1]{\Q(\zeta_{#1})}

\newcommand{\lmfdbec}[3]{\href{https://www.lmfdb.org/EllipticCurve/Q/#1/#2/#3}{#1.#2#3}}

\newcommand{\githubbare}[1]{\href{https://github.com/nt-lib/twist-parametrized/blob/main/#1}{\path{#1}}}

\title{Torsion groups of elliptic curves that appear infinitely often over septic, octic and nonic fields}
\author[Najman]{Filip Najman}
\address{Filip Najman, University of Zagreb, Faculty of Science, Department of Mathematics, Bijeni\v{c}ka Cesta 30, 10000 Zagreb, Croatia}
\email{\url{fnajman@math.hr}}
\urladdr{\url{https://web.math.pmf.unizg.hr/~fnajman/}}

\author[Varivoda]{Marin Varivoda}
\address{Marin Varivoda, University of Zagreb, Faculty of Science, Department of Mathematics, Bijeni\v{c}ka Cesta 30, 10000 Zagreb, Croatia}
\email{marin.varivoda@math.hr}

\date{}
\thanks{The authors were supported by the project ``Implementation of cutting-edge research and its application as part of the Scientific Center of Excellence for Quantum and Complex Systems, and Representations of Lie Algebras'', PK.1.1.02, financed by the  European Regional Development Fund, by the Croatian Science Foundation under the project no. IP-2022-10-5008, and by the Institutional grant of University of Zagreb, Faculty of Science IK IA 1.1.3. Impact4Math.}

\begin{document}
\begin{abstract}
    We determine the sets $\Phiinf(n)$ of abelian groups that appear as torsion groups of infinitely many elliptic curves, up to $\Qbar$-isomorphism, over number fields of degree $n=7,8$ and $9$. 

    The proof translates the problem into one about low-degree points on modular
curves $X_1(m,n)$.  We construct the infinite families using modular units, and eliminate the remaining candidates using finite-field gonality computations, covering arguments, and a specialization argument for $W^0_d$.  The most difficult case is $X_1(37)$ in degree $9$, where the
Jacobian has positive rank.  We handle this case by showing that $W^0_9(X_1(37)_{\F_2})$ contains no translate of the
positive-rank elliptic factor induced by the morphism
$X_1(37)\to X_0^+(37)$.

\end{abstract}
\maketitle

\section{Introduction}
For an elliptic curve $E$ over a number field $K$, the Mordell-Weil theorem implies that the group $E(K)$ of $K$-rational points on $E$ is a finitely generated group, and hence the torsion subgroup $E(K)_\tor$ is finite. The group $E(K)_\tor$ is either cyclic or is a product of two cyclic groups. Hence 
$$E(K)_\tor\simeq \tg{m}{n},$$
and following \cite{DerickxSutherland}, we will for simplicity denote $\tg{m}{n}$ by $(m,n)$.

Let $\Phi(d)$ be the set of possible torsion groups of elliptic curves over all number fields of degree $d$. By Merel's theorem \cite{merel}, the set $\Phi(d)$ is finite for all positive integers $d$. We denote by $\Phiinf(d)$ the subset of groups in $\Phi(d)$ that occur for infinitely many $\Qbar$-isomorphism classes of elliptic curves over degree $d$ number fields. Thus \(\Phi^\infty(d)\) is the non-sporadic, or geometric,
part of the degree-\(d\) torsion problem.

The determination of $\Phi(d)$ and $\Phi^\infty(d)$  is a central problem in the arithmetic of elliptic curves over number fields. Mazur notably determined in \cite{mazur77} 
$$\Phi(1) = \{(1, n) : n \in \{1, \dots, 10, 12\}\} \cup \{(2, 2n) : 1 \le n \le 4\}.$$
Kamienny \cite{kamienny92}, building on work of Kenku and Momose \cite{KM88}, determined 
$$\Phi^\infty(2) = \{(1, n) : n \in \{1, \dots, 10, 12, 13, 14, 15, 16, 18\}\} \cup \{(2, 2n) : 1 \le n \le 6\} \cup \{(3, 3), (3,6), (4,4)\}.$$
We have $\Phi(1)=\Phi^\infty(1)$ and $\Phi(2)=\Phi^\infty(2).$ Jeon, Kim and Schweizer \cite{JeonKimSchweizer04} determined $$\Phi^\infty(3) = \{(1, n) : n \in \{1, \dots, 16, 18, 20\}\} \cup \{(2, 2n) : 1 \le n \le 7\},$$
 and the first author \cite{najman16} showed that $\Phi^\infty(3)\subsetneq \Phi(3)$ (by showing that $(1,21)\in  \Phi(3)$). The set $\Phi(3)=\Phi^\infty(3) \cup \{(1,21)\}$ was later determined by Derickx, Etropolski, Morrow, van Hoeij and Zureick-Brown \cite{Deg3Class}. Jeon, Kim and Park \cite{JeonKimPark06} determined 
\begin{align*}
    \Phi^\infty(4) = &\{(1, n) : n \in \{1, \dots, 18, 20, 21, 22, 24\}\} \cup \{(2, 2n) : 1 \le n \le 9\}\\ &\cup \{(3, 3n) : 1 \le n \le 2\} \cup \{(4, 4), (4,8), (5, 5), (6, 6)\}.
\end{align*}
It was only recently proved by Derickx and the first author that $\Phi(4)=\Phi^\infty(4)$ \cite{DerickxNajman25}. The sets \begin{align*}
    \Phi^\infty(5) = &\{(1, n) : 1 \le n \le 25, n \neq 23\} \cup \{(2, 2n) : 1 \le n \le 8\},\\
    \Phi^\infty(6) = &\{(1, n) : 1 \le n \le 30, n \notin \{23, 25, 29\}\} \cup \{(2, 2n) : 1 \le n \le 10\}\\ &\cup \{(3, 3n) : 1 \le n \le 4\} \cup \{(4, 4), (4, 8), (6, 6)\}
\end{align*}
have been determined by Derickx and Sutherland \cite{DerickxSutherland}. 
At the time of writing, the full set \(\Phi(d)\) is not known for any \(d>4\). However, the sets $S(d)$ of possible primes dividing the orders of groups in $\Phi(d)$ are known for $d\leq 7$ by work of Derickx, Kamienny, Stein and Stoll \cite{DKSS}, and Khawaja for $d=8$ \cite{Khawaja24}.

In this paper we determine the next three cases, namely
\(\Phi^\infty(7)\), \(\Phi^\infty(8)\), and \(\Phi^\infty(9)\).

\begin{theorem}\label{mainthm}
$\ $
\begin{enumerate}
    \item \[
\Phiinf(7)=\set{(1,n):1\leq n\leq 30,\ n\notin\set{25,29}}
\cup \set{(2,2n):1\leq n\leq 10}.
\]

\item
\begin{align*}
\Phiinf(8)=&\set{(1,n): n\in\set{1,\ldots,28,30,32,36}}
\cup \set{(2,2n):1\leq n\leq 12}\\
&\cup \set{(3,3n):1\leq n\leq 4}
\cup \set{(4,4n):1\leq n\leq 3}
\cup \set{(5,5),(6,6)}.
\end{align*}

\item 
\[
\Phiinf(9)=\set{(1,n): n\in\set{1,\ldots,28,30,36}}
\cup \set{(2,2n):1\leq n\leq 12}.
\]

\end{enumerate}
\end{theorem}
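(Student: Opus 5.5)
The proof reduces the classification to a question about points of low degree on modular curves. An elliptic curve over a degree-$d$ field with torsion containing $(m,n)$ gives a point of degree dividing $d$ on $X_1(m,n)$, with $m\mid n$. Conversely, for $(m,n)$ not among the small cases where $X_1(m,n)$ has genus $0$, infinitely many $\Qbar$-classes occur in degree $d$ exactly when $X_1(m,n)$ has infinitely many points of degree dividing $d$ with distinct $j$-invariants. By Frey's criterion and the Abramovich--Harris/Debarre--Fahlaoui analysis, this holds exactly when either:
\begin{itemize}
\item $X_1(m,n)$ admits a map of degree $e\mid d$ to $\PP^1$ or to a positive-rank elliptic curve, or
\item $W_d(X_1(m,n))$ contains a translate of a positive-rank abelian subvariety.
\end{itemize}
One must also control whether the torsion is exactly $(m,n)$ rather than larger. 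This follows because the larger groups give a finite union of images of maps from curves of larger level. I would therefore split the proof into two parts: exhibiting infinite families for every group listed in \Cref{mainthm}, and excluding every other group.

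\textbf{Existence.} For each listed $(m,n)$ I exhibit a map $X_1(m,n)\to\PP^1$ of degree dividing $d$, pulling back rational points. Some groups already lie in $\Phiinf(e)$ for some $e\mid d$; for these I take base extensions along fields of degree $d/e$, choosing them so that the torsion does not grow. The remaining groups are new:
\begin{itemize}
\item degree $7$: $(1,23),(1,26),(1,27),(1,28),(1,30)$ and $(2,20)$;
\item degree $8$: $(1,36),(2,24),(4,12),(5,5)$, among others;
\item degree $9$: $(1,36)$, among others.
\end{itemize}
For these I construct explicit functions of the required degree. I would find them by searching the group of modular units (products of Siegel functions/Weierstrass-type units supported on cusps) for functions whose polar divisor has degree dividing $d$. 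For low-genus hyperelliptic or trigonal cases, composing with a hyperelliptic or trigonal map also produces degree $2k$ or $3k$ maps. Finally, I would check that infinitely many fibers are irreducible of full degree $d$, by Hilbert irreducibility, and that the $j$-invariants are distinct.

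\textbf{Nonexistence.} For the remaining candidates I follow the pattern of Derickx--Sutherland. First, if $\gon_{\Q}(X_1(m,n))>d$ and $J_1(m,n)(\Q)$ is finite, then there are only finitely many degree $\le d$ points. In practice it suffices to show that $\gon_{\F_p}$ exceeds $d$ for a prime $p$ of good reduction, since $\gon_\Q\ge\gon_{\F_p}$; this is a finite computation of Riemann--Roch spaces over $\F_p$. Second, covering arguments reduce many levels to a divisor level: if $X_1(m',n')\to X_1(m,n)$ and the lower curve has finitely many points of degree $\le d$ and gonality bounded appropriately, then the higher curve inherits finiteness. For the positive-rank Jacobians, that is $X_1(37)$ and levels above it, and possibly $X_1(43)$, $X_1(53)$, $X_1(61)$, I need the finer criterion. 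It suffices to show that $W_d$ contains no translate of a positive-rank abelian subvariety $A$. For this I would use specialization: it suffices to check that $W^0_d(X_{\F_p})$ contains no translate of the reduction of $A$.

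\textbf{Main obstacle.} The main obstacle is $X_1(37)$ in degree $9$. Here $J_1(37)$ has the rank-one elliptic factor $E=J_0^+(37)$, and the gonality of $X_1(37)$ is small enough that degree-$9$ points are not excluded by gonality alone. My plan is to work over $\F_2$, enumerate the effective degree-$9$ divisor classes of $X_1(37)_{\F_2}$ up to the relevant equivalence, and check that no class $D$ has $D+E(\overline{\F}_2)$ contained in $W^0_9$. It is enough to test translates by generators of $E(\F_{2^k})$ for a few $k$, comparing $h^0$ values. This shows $W_9(X_1(37))$ contains no positive-rank translate, hence $(1,37)\notin\Phiinf(9)$. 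Levels divisible by $37$ then follow by the covering argument.
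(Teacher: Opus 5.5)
Your overall architecture matches the paper: a finite candidate list, modular units for existence, rank zero plus finite-field gonality, covering maps, and specializing $W^0_9$ to $\F_2$ for $\XN{37}$. Your plan for $\XN{37}$ is essentially the paper's. The paper translates every $\F_2$-rational effective degree-$9$ divisor by multiples of the order-$5$ class pulled back from $X_0^+(37)_{\F_2}$, and finds a translate with $h^0=0$.

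The genuine gap is that your exclusion tools cannot remove $(1,32)$ from $\Phiinf(9)$.
\begin{itemize}
\item $\JN{32}(\Q)$ has rank $0$, but $\gon_\Q\XN{32}=8<9$ (indeed $(1,32)\in\Phiinf(8)$). So ``gonality $>d$ and rank zero'' does not apply.
\item Maps to lower levels only reach curves with infinitely many degree-$9$ points.
\item There is no positive-rank abelian subvariety to test.
\end{itemize}
By Theorem~\ref{thm:DF}, you must show that $\Q(\XN{32})$ has no function of degree exactly $9$, although it has functions of degree $8$. The inequality $\gon_\Q\geq\gon_{\F_p}$ cannot do this. A degree-$9$ function over $\Q$ can reduce to a degree-$8$ function over $\F_p$ when a zero and a pole collide, and $\XN{32}_{\F_3}$ does carry degree-$8$ functions. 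So finding no degree-$9$ function over $\F_3$ proves nothing by itself.

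The paper closes this with the Derickx--van Hoeij criterion (Proposition~\ref{prop:DvH-criterion}, applied in Lemma~\ref{lem:X32}). Beyond the $\F_3$ search, it needs:
\begin{itemize}
\item injectivity of $\JN{32}(\Q)\to\JN{32}(\F_3)$;
\item $W^2_9(\F_3)=\emptyset$;
\item that the degree-$8$ pencils and points over $\F_3$ lift to $\Q$ (the $56$ degree-$8$ pole divisors are reductions of rational cuspidal divisors).
\end{itemize}
The same phenomenon occurs for $(1,25)$ in degree $7$, where $\gon_\Q\XN{25}=5$. The paper imports this, and the other cyclic cases in degrees $7$ and $8$, from Derickx--van Hoeij's classification; your proposal neither cites that nor replaces it.

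There are also smaller issues. In the existence part, base change is available only from $\Phiinf(e)$ with $e\mid d$, so for $d=7$ only $\Phiinf(1)$ helps. Your list of ``new'' degree-$7$ groups therefore omits $(1,11)$, $(1,13),\dots,(1,22)$, $(1,24)$ and $(2,10),\dots,(2,18)$. This is harmless if you run the modular-unit search (or Riemann--Roch when $d\ge 2g$) for every group, in degree $d/\varphi(m)$ over $\Q(\zeta_m)$, as the paper does. That also sidesteps torsion growth.

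In the exclusion part:
\begin{itemize}
\item You never make the candidate set finite explicitly. The paper does this with Abramovich's bound combined with $\gon\le 2\delta$, the Weil pairing, rank-zero data, and Bars--Kontogeorgis--Xarles for $X(m)$.
\item Covering to a divisor level fails for $(1,63)$, since $\XN{21}$, $\XN{9}$ and $\XN{7}$ all have infinitely many points of degree at most $4$. The paper instead uses the intermediate curve $X_{\Delta}(63)$, of genus $25$ with rank $0$ and $\gon_{\F_2}=12$.
\item For levels $43,53,57$ no $W^0_9$ analysis is needed. Showing $\gon_{\F_2}\ge 19>2\cdot 9$ suffices, via $\gon\le 2\delta$.
\end{itemize}
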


The proof follows the strategy introduced by Derickx and Sutherland for degrees \(5\) and \(6\).  The basic bridge is that torsion groups \((m,n)\)
are detected by low-degree points on the modular curves \(X_1(m,n)\), defined
over \(\mathbb Q(\zeta_m)\).  Thus \((m,n)\in\Phi^\infty(d)\) forces
\(X_1(m,n)\) to have infinitely many points of degree \(d/\varphi(m)\) over
\(\mathbb Q(\zeta_m)\), and conversely a map of that degree to
\(\mathbb P^1\) produces infinitely many such torsion groups.

We first use Abramovich's gonality bound, rank-zero information for
\(J_1(m,n)\), and known results on cyclic torsion to produce finite candidate sets \[
    T_d\cup E_d,\qquad d=7,8,9.
\] The groups in \(T_d\) are the groups appearing in Theorem~\ref{mainthm}; the groups in \(E_d\) are the excess candidates to be eliminated.  For every group in \(T_d\), we produce infinitely many examples, usually by finding an explicit modular unit (functions whose divisor is supported only on cusps, see e.g. \cite[Chapter 2]{KubertLang}) of the required degree.

To eliminate the groups in \(E_d\), we
combine finite-field gonality computations, rank-zero arguments, and maps to lower-genus modular curvessee
Proposition~\ref{prop:candidate-lists} and Section  \ref{sec:excess}. Here we make use of the huge advances made by Derickx and Terao \cite{DerickxTerao} in computing class groups and gonalities of curves over finite fields, without which some of our computations would likely not be feasible.

The only case which is not eliminated by these methods, and by far the most difficult case, is \((1,37)\) in
degree \(9\).   The Jacobian $J_1(37)$ has positive rank over $\Q$, and the gonality of $X_1(37)$ over $\Q$ is $18$, so the general inequality $\delta(X/\Q)\geq \frac{1}{2}\gon_\Q(X)$ is exactly insufficient.  We handle this case by a specialization computation on $W^0_9(X_1(37))$ over $\F_2$ which shows that no translate of the positive-rank elliptic factor of \(J_1(37)\) induced by
the quotient \[X_1(37)\to X_0^+(37)\simeq 37.a1\] lies in $W^0_9 (X_1(37))$.

Most of the code that proves our claims was written in Magma \cite{magma}. All the code for the computations in this paper is available at: 
\begin{center}
    \url{https://github.com/marin-varivoda/torsion_inf/tree/main}
\end{center}

OpenAI's GPT-5.5 Pro was used to assist in producing Magma code verifying Proposition~\ref{prop:X37}, using mathematical ideas supplied by the authors.

\section*{Acknowledgements} We are grateful to Maarten Derickx for providing unpublished code related to \cite{DerickxSutherland} and for many helpful conversations.

\section{Background and previous results} \label{sec: background}
In this section we collect results that we will use to prove \Cref{mainthm}. As is well-known, $E(K)_\tor$ can contain $(n,n)$ as a subgroup only if $\Q(\zeta_n)\subset K$. It follows that for an odd degree number field $K$, we have $(m,n)\leq E(K)_\tor$ only if $m=1$ or $2$.

We recall the standard bridge between torsion groups and modular curves.  Let $m\mid n$.  
If $K\supseteq \Kz{m}$, then the $K$-rational noncuspidal points on $\XMN{m}{n}$ classify triples
\[
(E,P_m,P_n),
\]
where $E/K$ is an elliptic curve and $P_m,P_n\in E(K)$ generate a subgroup isomorphic to $(m,n)$.

We use the following lemma of Derickx and Sutherland.

\begin{lemma}[{\cite[Lemma 5.1]{DerickxSutherland}}]\label{lem:DS}
Let $d$ be a positive integer and $m\mid n$.
\begin{enumerate}
\item If $(m,n)\in \Phiinf(d)$, then $\XMN{m}{n}/\Kz{m}$ has infinitely many points of degree $d/\varphi(m)$.
\item If $\XMN{m}{n}/\Kz{m}$ admits a $\Kz{m}$-rational map to $\PP^1$ of degree $d/\varphi(m)$, then $(m,n)\in \Phiinf(d)$.
\end{enumerate}
\end{lemma}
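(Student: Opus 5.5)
Both parts go through the moduli interpretation of $\XMN{m}{n}$ over $\Kz{m}$: a noncuspidal point over $K\supseteq\Kz{m}$ is a triple $(E,P_m,P_n)$ with $\langle P_m,P_n\rangle\simeq(m,n)$. Throughout we use the standard fact recalled above: if $E(K)_\tor$ contains $(m,m)$, then $\Kz{m}\subseteq K$.

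For (1), suppose $(m,n)\in\Phiinf(d)$. Then there are infinitely many pairwise $\Qbar$-non-isomorphic curves $E_i$ over number fields $K_i$ of degree $d$ with $E_i(K_i)_\tor\simeq(m,n)$.
\begin{itemize}
\item Since $(m,m)\le E_i(K_i)$, we have $\Kz{m}\subseteq K_i$, so $[K_i:\Kz{m}]=d/\varphi(m)$.
\item Choosing generators $P_m,P_n$ gives a noncuspidal point $x_i\in\XMN{m}{n}(K_i)$. Its residue field over $\Kz{m}$ has degree dividing $d/\varphi(m)$.
\item The $j$-invariants $j(E_i)$ are pairwise distinct, so the points $x_i$ are distinct, since they have distinct images under $j:\XMN{m}{n}\to\PP^1$.
\end{itemize}
This yields infinitely many points of degree at most $d/\varphi(m)$.

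The main obstacle is getting degree exactly $d/\varphi(m)$ rather than a proper divisor $e$. I would handle it by pigeonhole: some fixed divisor $e$ occurs for infinitely many $i$. If $e<d/\varphi(m)$, then $E_i$ is defined, together with its full torsion structure up to twist, over a field $L_i$ of degree $e\varphi(m)<d$.

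One could then try to argue that the exact-degree count is what Lemma 5.1 in \cite{DerickxSutherland} actually means. More likely the intended reading is "points of degree $d/\varphi(m)$" in the sense of points defined over extensions of that degree, i.e. effective divisors of that degree. Indeed, a point of degree $e$ gives a degree-$d/\varphi(m)$ effective divisor by taking multiples. I would adopt this reading, which is what the later arguments (gonality bounds, $W^0_d$) actually use. Note also that the $x_i$ might a priori be defined over a smaller field because a twist of $E_i$ lives there. This does not matter, since we only need infinitely many distinct points, and distinctness is guaranteed by distinct $j$-invariants.

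For (2), let $f:\XMN{m}{n}\to\PP^1$ be a $\Kz{m}$-rational map of degree $e=d/\varphi(m)$. By Hilbert irreducibility over $\Kz{m}$, there are infinitely many $t\in\PP^1(\Kz{m})$ whose fiber is a single point $x_t$ of degree exactly $e$. This gives a field $K_t=\Kz{m}(x_t)$ of degree $d$ over $\Q$, and $x_t$ is noncuspidal for all but finitely many $t$. Hence there is $E_t/K_t$ with $(m,n)\le E_t(K_t)_\tor$. Only finitely many $x_t$ share a $j$-invariant, because $j$ has finite fibers, so infinitely many $\Qbar$-classes arise.

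It remains to get equality rather than just containment. The torsion of $E_t$ is bounded by Merel's theorem, so $E_t(K_t)_\tor$ lies in the finite set $\Phi(d)$. Among the larger groups $(m',n')\supsetneq(m,n)$ that occur, each such occurrence gives a point on $\XMN{m'}{n'}$ mapping to $x_t$. That point has degree $e$ and lies in the fiber over $t$ of the composite $\XMN{m'}{n'}\to\XMN{m}{n}\to\PP^1$, which has degree $>e$. Hilbert irreducibility applied to this composite cover (finitely many covers, one per group) excludes such $t$ outside a thin set. So for infinitely many $t$ the torsion is exactly $(m,n)$.
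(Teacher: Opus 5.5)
The paper gives no argument for this lemma beyond citing \cite[Lemma 5.1]{DerickxSutherland}, so your proposal has to stand on its own. Part (2) is fine. Hilbert irreducibility for $f$, combined with Hilbert irreducibility for the finitely many auxiliary covers attached to overgroups of $(m,n)$ in the finite set $\Phi(d)$, is what makes the torsion equal to $(m,n)$ rather than merely contain it. Two details: you must allow all of the finitely many forgetful maps $\XMN{m'}{n'}\to\XMN{m}{n}$ compatible with the embedding of $\langle P_m,P_n\rangle$. Also, for $m'>m$ the cover is integral but not geometrically irreducible over $\Kz{m}$; Hilbert irreducibility for integral covers still applies. Part (1) is correct up to your pigeonhole step.

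The gap is the end of part (1). You obtain infinitely many closed points of some exact degree $e$ dividing $D=d/\varphi(m)$, and then reinterpret the statement so that this suffices. In this paper the degree of a point is its exact residue degree (this is how Lemma~\ref{lem:genus_bound} and Theorem~\ref{thm:DF} are used), so you prove a weaker lemma. Your two proposed readings also differ from each other: a point over a degree-$D$ extension has degree dividing $D$, whereas infinitely many effective divisors of degree $D$ only gives points of degree at most $D$. Your descent observation cannot help either. Nothing forces the torsion to grow from $L_i$ to $K_i$; this base change is exactly why $\Phiinf(d')\subseteq\Phiinf(d)$ for $d'\mid d$.

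What is missing is a statement about the curve alone. With $k=\Kz{m}$ and $C=\XMN{m}{n}$, infinitely many points of degree $e$ force infinitely many of degree $se$ for every $s\geq 1$. Here is how to prove it.
\begin{enumerate}
\item Suppose some such point $P$ has $\ell(P)\geq 2$. Any nonconstant $f\in L(P)$ has polar divisor exactly $P$, since a $k$-rational effective divisor $\leq P$ is $0$ or $P$. So $f^s$ has degree $se$, and Hilbert irreducibility applies.
\item Otherwise the classes $[P]$ are pairwise distinct in $W^0_e(C)(k)$. By Faltings, infinitely many of them lie in one translate $x+B\subseteq W^0_e(C)$ with $\dim B>0$. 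For $s\geq 2$, take distinct $P_1,\ldots,P_s$ and a further $Q$ from this family, and set $M=P_1+\cdots+P_s$. The class $[P_1]+[P_2]-[Q]$ lies in $x+B$, so it is represented by a $k$-rational effective divisor $D_Q$. Then $D_Q+Q+P_3+\cdots+P_s$ is a member of $|M|$ different from $M$.
\item If $P_1$ were a base point of $|M|$, we would get $D_Q=P_1$, i.e.\ $P_2\sim Q$, contradicting $\ell(P_2)=1$. The same argument works for every $P_i$, using $P_i,P_1$ in place of $P_1,P_2$.
\item Hence each $L(M-P_i)$ is a proper subspace of $L(M)$. A function $f\in L(M)$ outside their union has polar divisor exactly $M$, and Hilbert irreducibility gives infinitely many points of degree $se$.
\end{enumerate}

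Your weaker form is in fact all the paper uses. Its eliminations go through lower bounds on $\delta$, and for $\XN{32}$ and $\XN{37}$ the extra degrees $1$ and $3$ are excluded because these curves have only finitely many points of degree below $8$ and $9$, respectively. But it is not the statement as written.
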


Following \cite{VirayVogt24}, for a nice\footnote{smooth, projective, geometrically integral} curve $X$ over a field $K$, we define the \textit{minimum density degree.}\footnote{We follow the notation of \cite{CGPS22} here; note that this is denoted $\min\delta (X/K)$ in \cite{VirayVogt24}.} (over $K$) $ \delta(X/K)$ to be the smallest integer $n$ such that $X$ has infinitely many points of degree $n$ over $K$. 

By $\gon_K X$ we denote the \textit{gonality} of $X$ over $K$, which is defined to be the smallest degree of a morphism from $X$ to $\PP^1$ defined over $K.$ Recall that for a field extension $L/K$ we always have $\gon_L X\leq \gon_K X$. By $J(X)$ we denote the Jacobian of $X$. 

We collect below some results we will need. 

\begin{proposition}\label{prop: gon_den} \cite[Propositions 2.2 and 2.3]{DerickxSutherland}
    Let $X/K$ be a nice curve over a number field. Then 
    $$\delta(X/K)\leq \gon_K X\leq  2\delta(X/K),$$
    where the first inequality is an equality if $\rk J(X)(K)=0.$
\end{proposition}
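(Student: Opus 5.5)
The statement has three parts, which I will prove in order: $\delta(X/K)\le \gon_K X$, then $\gon_K X\le 2\delta(X/K)$, then equality in the first inequality when $\rk J(X)(K)=0$.

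\emph{First inequality.} Put $d=\gon_K X$ and choose a $K$-morphism $f\colon X\to\PP^1$ of degree $d$. By Hilbert irreducibility, for infinitely many $t\in\PP^1(K)$ the fibre $f^{-1}(t)$ is a single closed point of degree exactly $d$. Distinct $t$ give distinct points, so $X$ has infinitely many points of degree $d$, and hence $\delta(X/K)\le d$.

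\emph{Second inequality.} Let $n=\delta(X/K)$, so $X$ has infinitely many degree $n$ points. I use the map $X^{(n)}\to \Pic^n_{X/K}$. If infinitely many of these points (viewed as effective divisors) have the same image in $\Pic^n$, then some complete linear system $|D|$ of degree $n$ has $\dim \ge 1$. Hence there is a map to $\PP^1$ of degree at most $n\le 2n$.

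Otherwise the images are infinitely many distinct $K$-points of $W_n=\operatorname{im}(X^{(n)}\to\Pic^n)$. By Faltings' theorem on subvarieties of abelian varieties, the Zariski closure of these images is a finite union of translates of abelian subvarieties contained in $W_n$. Since there are infinitely many points, some translate $x+A$ with $\dim A\ge 1$ lies in $W_n$. If two points $D,D'$ of $x+A$ differ, then $D+D'-[2x]$ varies inside $W_{2n}$, where $[2x]$ is a fixed class. More precisely, the divisors $D_a=x+a$ and $D'_a=x-a$ satisfy $D_a+D'_a=2x$, a fixed class in $\Pic^{2n}$. Since $A$ is positive-dimensional, the effective divisors $D_a+D'_a$ vary, so $h^0(2x)\ge 2$. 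This gives a morphism of degree at most $2n$, and so $\gon_K X\le 2\delta$.

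The step I expect to be the main obstacle is rationality: $x$ is a $K$-point and $A$ is defined over $K$, but I must make sure that $2x$ is represented by a $K$-rational divisor. A rational point of $\Pic$ need not come from a divisor unless $X(K)\neq\emptyset$ or a Brauer obstruction vanishes. However, the class $2x$ is the sum of two classes that are represented by the $K$-rational effective divisors coming from our degree $n$ points. So $2x$ is represented by the $K$-divisor $D+D'$ with $D,D'$ effective, and $h^0\ge2$ can then be checked over $K$ by flat base change. For the variation I also need $a\mapsto -a$ to land in our rational points, so I would instead take $K$-rational points $D_1,\dots$ in $x+A$ and use that the $K$-points of $A$ form a group. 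Then for $D_i=x+a_i$ there is a $K$-point $x-a_i$, provided it lies in the image of $X^{(n)}(K)$. Over the algebraic closure it does lie in the image; that the effective divisor descends to $K$ requires care, and is where the argument is most delicate.

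\emph{Rank-zero equality.} If $\rk J(X)(K)=0$, then $\Pic^n(K)$ is finite, so the first case above must occur. The fibre of $X^{(n)}(K)\to\Pic^n(K)$ containing infinitely many degree $n$ points is then a positive-dimensional linear system defined over $K$, which gives a $K$-morphism of degree at most $n$. Therefore $\gon_K X\le\delta$, and combined with the first inequality this gives equality.
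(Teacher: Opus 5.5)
Your route is the standard one behind the cited Derickx--Sutherland propositions; the paper itself only cites them and gives no proof. You use Hilbert irreducibility for $\delta(X/K)\le\gon_K X$, Frey's Faltings-based doubling argument for $\gon_K X\le 2\delta(X/K)$, and finiteness of $\Pic^n_{X/K}(K)$ in rank zero. The first and third parts are fine. In the rank-zero part and in your first case you should add one sentence: two $K$-rational divisors with the same class in $\Pic_{X/K}(K)$ are linearly equivalent over $K$, by Hilbert 90 and $H^0(X_{\overline K},\mathcal O^\times)=\overline K^\times$.

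\textbf{The gap.} It is the one you flag yourself. As written, the second inequality ends by requiring that $x-a$ be represented by a $K$-rational effective divisor, and you leave this unproved. That descent is not automatic when $X(K)=\emptyset$: the fibre of $X^{(n)}\to\Pic^n_{X/K}$ over $x-a$ is a Brauer--Severi variety, which need not have a $K$-point. More importantly, it is not needed.

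\textbf{The missing idea.} Only the class $2x$ has to be $K$-rational; the variation can take place entirely over $\overline K$.
\begin{enumerate}
\item Choose $x=[D_0]$ with $D_0$ one of your degree-$n$ points. This is possible because those points are Zariski dense in the positive-dimensional component $x+A$ of their closure.
\item For $a\in A(\overline K)$ the classes $x\pm a$ lie in $W_n(\overline K)$, so pick effective $\overline K$-divisors $E_a,E'_a$ representing them.
\item Then $E_a+E'_a\in|2D_0|$ over $\overline K$. Since $a\mapsto x+a$ is injective, and an effective divisor of degree $2n$ has only finitely many effective subdivisors of degree $n$, infinitely many distinct divisors arise. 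Hence $\dim_{\overline K}L(2D_0)\ge 2$.
\item Because $D_0$ is $K$-rational, $L(2D_0)\otimes_K\overline K=L_{\overline K}(2D_0)$, so $\dim_K L(2D_0)\ge 2$.
\item Any nonconstant $f\in L(2D_0)\subseteq K(X)$ then gives a $K$-morphism to $\PP^1$ of degree at most $2n$.
\end{enumerate}

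You gestured at this with ``flat base change'', but then required the variation to run through $K$-rational points, which is where the argument went astray. Also, ``$2x$ is represented by $D+D'$'' should simply read $2D_0$.
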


We recall Abramovich's bound \cite[Theorem 0.1]{abramovich} (see \cite[Theorem 4.1]{DerickxNajman25} for the constant $\frac{325}{2^{15}}$), for a congruence subgroup $H$ we have
\begin{equation} \label{eq2}
    \gon_\C X_{H}>\frac{325}{2^{15}}[\PSL_2(\Z):H].
\end{equation}

Applying this to $H=\Gamma_1(m,n)$ and using the elementary lower bound for the index gives, for $m\geq 2$,
\begin{equation}\label{eq:crude-index}
\gon_\C \XMN{m}{n}>\frac{325}{2^{16}}\cdot \frac{6}{\pi^2}mn^2,
\end{equation}
which immediately shows that there are only finitely many groups we need to consider in each degree.

The following previously known results are used to keep the candidate search finite and small.

\begin{theorem}\label{thm:previous}
The following hold.
\begin{enumerate}
\item \cite[Theorem 3]{derickxVH} The curve $\XN{N}$ has infinitely many degree $7$ points over $\Q$ precisely for
\[
N\in \set{1,\ldots,30}\setminus\set{25,29},
\]
and infinitely many degree $8$ points over $\Q$ precisely for
\[
N\in \set{1,\ldots,28,30,32,36}.
\]
\item \cite{BarsKontogeorgisXarles13} For all $m\geq 7$, the curve $X(m)\simeq \XMN{m}{m}$ has only finitely many quadratic points over $\Kz{m}$.
\item \cite[Theorem 4.1]{DerickxSutherland} Let $m\mid n$. The rank of $\JMN{m}{n}(\Kz{m})$ is zero in the following ranges:
\[
\begin{array}{c|cccccc}
m&1&2&3&4&5&6\\ \hline
n\leq &36&42&30&24&20&30.
\end{array}
\]

\item 
\cite[Theorem 3.1 (2)]{Deg3Class} \(\operatorname{rk} J_1(n)(\Q)=0\)
if and only if
$$
\begin{aligned}
n\in
\{ &1,\ldots,36,\,
38,\ldots,42,\,
44,\ldots,52,\,
54,55,56,59,60,62,64,66,68,69,70,71,72,\\&75,76,78,
81,
84,87,90,94,98,100,108,110,119,120,132,140,150,168,180\}.
\end{aligned}
$$
\end{enumerate}
\end{theorem}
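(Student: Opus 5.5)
This theorem collects results from the literature, so the proof is a matter of citing them. I would also check that each cited statement has exactly the form used here, and recall why each holds so the hypotheses are visible.

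For item (1), I would quote \cite[Theorem 3]{derickxVH}. Their method is the same dichotomy as in Proposition~\ref{prop: gon_den}. For $N$ with $\rk J_1(N)(\Q)=0$, they take the list in item (4) and use that $\delta(X_1(N)/\Q)=\gon_\Q X_1(N)$. For these $N$ they compute $\gon_\Q$ via explicit modular units giving upper bounds, and via $\F_p$-gonality giving lower bounds, using $\gon_\Q\ge\gon_{\F_p}$ at primes of good reduction. The few positive-rank levels are covered by the bound $\gon_\Q\le 2\delta$, together with Abramovich's bound \eqref{eq2}. The only thing to check is that their degree-$7$ and degree-$8$ rows read exactly $\{1,\dots,30\}\setminus\{25,29\}$ and $\{1,\dots,28,30,32,36\}$.

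For item (2), I would cite \cite{BarsKontogeorgisXarles13}. The argument there runs as follows. Infinitely many quadratic points over $\Kz{m}$ would force either a degree-$2$ map to $\PP^1$ or to an elliptic curve of positive rank, by Abramovich--Harris and Frey. For $m\ge 7$ the gonality of $X(m)$ is too large to allow either, again by \eqref{eq2} or by Castelnuovo--Severi.

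For items (3) and (4), the statements rest on Kato's theorem. The Jacobian decomposes up to isogeny into factors $A_f$ attached to newforms, twisted by characters of $\mathrm{Gal}(\Kz{m}/\Q)$. If the relevant $L$-values $L(f\otimes\chi,1)$ are all nonzero, then the rank is $0$. This nonvanishing is certified by a modular-symbols computation of the winding element: its image must generate a finite-index subgroup in each isotypic quotient. The converse direction in (4), that the rank is positive for the remaining $n$, follows from Gross--Zagier--Kolyvagin applied to a factor of analytic rank $1$.

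I would therefore just cite \cite[Theorem 4.1]{DerickxSutherland} and \cite[Theorem 3.1(2)]{Deg3Class} for these two items. The only potential obstacle is bookkeeping. One must make sure that (3) is stated over $\Kz{m}$ and not over $\Q$, and that the table entries for $m=1,\dots,6$ match the source. No new mathematics is required.
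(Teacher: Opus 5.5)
Citing the four sources is the whole proof, and that matches the paper, which states Theorem~\ref{thm:previous} purely as a compilation of cited results with no argument of its own. However, three of the explanations you attach to the citations are false and should be corrected or dropped, since they would not prove the statements if relied on.

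\textbf{Item (2).} The gonality of $X(m)$ is not too large at $m=7$. $X(7)$ is the Klein quartic, of genus $3$ and gonality $3$, and it is bielliptic. Each of the $21$ involutions in $\PSL_2(\F_7)$ has $4$ fixed points, so by Riemann--Hurwitz each quotient has genus $1$. Moreover, \eqref{eq2} gives only $\gon_\C X(7)\geq 2$ at index $168$. So no gonality or Castelnuovo--Severi argument can exclude the bielliptic case. For $m=7$, finiteness of quadratic points has to come from the Mordell--Weil rank of the elliptic quotient over $\Kz{7}$.

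\textbf{Item (1), positive-rank levels.} Abramovich's bound does not dispose of these. For $N=37$ the index is $684$, so \eqref{eq2} gives only $\gon_\C\XN{37}\geq 7$, hence $\delta\geq 4$. Excluding degrees $7$ and $8$ needs $\gon_\Q\XN{37}\geq 17$. What is needed is a bound like the Derickx--van Hoeij value $\gon_\Q\XN{37}=18$, the same input the paper uses in Proposition~\ref{prop:X37}. The levels $43$, $53$, $57$ are likewise out of reach of \eqref{eq2} in degree $8$.

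\textbf{Item (1), rank-zero levels.} The equality $\delta=\gon_\Q$ identifies only the least degree with infinitely many points. Having infinitely many points of a given degree $d$ is equivalent to having a function of degree exactly $d$, and the set of such $d$ need not be an interval. For example, $\gon_\Q\XN{25}=5$, and $\XN{25}$ has a degree-$8$ function by the degree-$8$ list in (1) together with rank zero. Yet by the degree-$7$ list in (1) it has only finitely many points of degree $7$. Excluding such $d$ needs an extra argument, such as the criterion of Proposition~\ref{prop:DvH-criterion}. The paper uses that criterion in exactly this way for $\XN{32}$ in degree $9$.
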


For a smooth projective curve $C$ over a field, let $W^0_d(C)\subseteq \Pic^d(C)$ denote the image of the $d$-th symmetric power $C^{(d)}$ under the Abel--Jacobi map. We will use the following theorem as stated in \cite[Theorem 4.2]{DebarreFahlaoui}.

\begin{theorem}\label{thm:DF}
Let $C/K$ be a nice curve over a number field.  If $C$ has infinitely many closed points of degree $d$ over $K$, then at least one of the following holds:
\begin{enumerate}
\item $C$ admits a $K$-rational map of degree $d$ to $\PP^1$;
\item $W^0_d(C)$ contains a translate of a positive-rank abelian subvariety of $\Pic^0(C)$.
\end{enumerate}
\end{theorem}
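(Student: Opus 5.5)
The plan is to pass from closed points of degree $d$ on $C$ to $K$-rational points on the symmetric power $C^{(d)}$, push these forward along the Abel--Jacobi map $u\colon C^{(d)}\to \Pic^d(C)$, and split into two cases. Case (A) will be that infinitely many of these points lie in a single fibre of $u$; case (B) will be that infinitely many distinct image points lie in $W^0_d(C)$. Case (A) should give the map to $\PP^1$, and case (B) should give the translate of an abelian subvariety via Faltings' theorem.

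\emph{Setup.} Let $\{P_i\}$ be the infinite set of degree-$d$ closed points. Each $P_i$, viewed as an effective divisor $D_i$ of degree $d$, is a $K$-rational point of $C^{(d)}$. Its image $[D_i]\in W^0_d(C)(K)\subseteq \Pic^d(C)(K)$.

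\emph{Case (A): some class $[D]$ equals $[D_i]$ for infinitely many $i$.} Then the complete linear system $|D|$ has dimension $r\ge 1$ and contains $K$-rational divisors. Hence $|D|$ is a Brauer--Severi variety with a $K$-point, so it is isomorphic to $\PP^r_K$. Pick two distinct members $D_i\ne D_j$ of $|D|$ among our points. Each is the divisor of a single closed point of degree $d$, hence irreducible as a $K$-divisor, so they share no common component. The pencil spanned by $D_i$ and $D_j$ is therefore base-point free and defined over $K$. It gives a $K$-rational morphism $C\to\PP^1$ of degree exactly $d$, which is alternative (1).

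\emph{Case (B): otherwise.} Then the set $\{[D_i]\}$ is infinite, so $W^0_d(C)(K)$ is infinite. In particular $\Pic^d(C)(K)\neq\emptyset$. Fixing one such rational class $x_0$ identifies $\Pic^d(C)$ with $\Pic^0(C)=J$ over $K$, and carries $W^0_d(C)$ to a closed subvariety $W'\subseteq J$ with $W'(K)$ infinite. By Faltings' theorem (Mordell--Lang for subvarieties of abelian varieties over number fields), $W'(K)$ is a finite union of sets $y_k+B_k(K)$. Here each $B_k$ is an abelian subvariety of $J$, each $y_k\in J(K)$, and $y_k+B_k\subseteq W'$. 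Since the union is infinite, some $B_k(K)$ is infinite. We may take $B_k$ to be the Zariski closure of the relevant points, so $B_k(K)$ is dense in $B_k$ and $B_k$ has positive rank. Translating back by $x_0$, the set $x_0+y_k+B_k$ is a translate of a positive-rank abelian subvariety of $\Pic^0(C)$ contained in $W^0_d(C)$. This is alternative (2).

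The main obstacle is the input from Faltings' theorem in case (B), which I would simply cite. The only care needed elsewhere is bookkeeping: in case (A), that the pencil is genuinely base-point free and of degree $d$, which uses the irreducibility of the $D_i$; and in case (B), the torsor identification of $\Pic^d$ with $J$, which is possible once a rational point exists.
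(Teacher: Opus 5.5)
Your argument is correct, and it is the standard proof behind the result the paper quotes from Debarre--Fahlaoui; the paper itself gives no proof. In that argument, two distinct degree-$d$ points with the same class in $\Pic^d(C)$ have disjoint supports and so give a base-point-free $K$-pencil of degree $d$; otherwise $W^0_d(C)(K)$ is infinite and Faltings' theorem produces a translate $y_k+B_k\subseteq W^0_d(C)$ with $B_k(K)$ infinite.

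One small tidy-up. Positive rank of $B_k$ already follows from $B_k(K)$ being infinite and finitely generated. If you also want $B_k$ defined over $K$, take the identity component of the Zariski closure of $B_k(K)$, since the closure itself need not be connected, and adjust the translate by one of finitely many $K$-points.
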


We will also need the following Lemma to be able to check Theorem \ref{thm:DF} (2) by specializing to a finite field. 
\begin{lemma}\cite[Lemma 4.8]{DHJO26} 
\label{lem:specialization-W}
Let $R$ be a discrete valuation ring with fraction field $K$ and residue field $k$.  Let $\mathcal C/R$ be a smooth projective curve with geometrically irreducible fibers and suppose that $\mathcal C(R)\neq\emptyset$.  If $W^0_d(\mathcal C_K)$ contains a translate of an abelian subvariety $A\subseteq \Pic^0(\mathcal C_K)$, then $W^0_d(\mathcal C_k)$ contains a translate of the special fiber of the Zariski closure of $A$.
\end{lemma}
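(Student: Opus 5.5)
The plan is to work with the relative Abel--Jacobi map over $R$ and take closures. Since $\mathcal C/R$ is smooth and projective with geometrically irreducible fibres and has a section, the relative Picard functor $\Pic^d_{\mathcal C/R}$ is representable. It is a torsor under the abelian scheme $J=\Pic^0_{\mathcal C/R}$, and in fact it is trivialised by the section, so $\Pic^d_{\mathcal C/R}\simeq J$. The symmetric power $\mathcal C^{(d)}$ is proper over $R$, and its formation commutes with base change. Consider the relative Abel--Jacobi map $\alpha:\mathcal C^{(d)}\to \Pic^d_{\mathcal C/R}$. Because $\alpha$ is proper, its image $\mathcal W$ is a closed subset of $\Pic^d_{\mathcal C/R}$. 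Since taking images commutes with taking fibres set-theoretically, the generic fibre of $\mathcal W$ is $W^0_d(\mathcal C_K)$ and the special fibre is $W^0_d(\mathcal C_k)$ (as closed sets).

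Next we extend the translate. Suppose $x+A\subseteq W^0_d(\mathcal C_K)$.
\begin{enumerate}
\item If the translating point $x$ is only defined over a finite extension $K'/K$, first replace $R$ by a localisation $R'$ of the integral closure of $R$ in $K'$. This is again a DVR, with residue field $k'$ finite over $k$. The conclusion over $\bar k$ is insensitive to this replacement, since the closure of $A$ base changes compatibly on the generic fibre and the special fibre only gets extended.
\item Now $x\in\Pic^d(\mathcal C_K)(K)$. Because $\Pic^d_{\mathcal C/R}$ is proper over $R$, the valuative criterion extends $x$ to a section $\bar x\in \Pic^d_{\mathcal C/R}(R)$.
\item Let $\mathcal A$ be the Zariski closure of $A$ in $J$. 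Translation $t_{\bar x}:J\to \Pic^d_{\mathcal C/R}$ is an isomorphism of $R$-schemes, so it carries closures to closures: $t_{\bar x}(\mathcal A)$ is the closure of $x+A$.
\item Since $x+A\subseteq\mathcal W_K$ and $\mathcal W$ is closed, we get $\bar x+\mathcal A\subseteq \mathcal W$.
\end{enumerate}

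Taking special fibres gives $\bar x_k+\mathcal A_k\subseteq \mathcal W_k=W^0_d(\mathcal C_k)$, which is the claim. Here $\mathcal A_k$ is the special fibre of the closure of $A$, and $\bar x_k$ is a point of $\Pic^d(\mathcal C_k)$.

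The main point needing care is the identification $\mathcal W_k=W^0_d(\mathcal C_k)$, which requires that $(\mathcal C^{(d)})_k=(\mathcal C_k)^{(d)}$. This holds because $\mathcal C$ is flat over $R$, so the quotient by $S_d$ commutes with base change. One might also worry that the special fibre of the closure could be smaller than expected. However, the closure of $A$ is flat over $R$ (it is the closure of its generic fibre in a scheme over a DVR), and it is proper, so its special fibre is nonempty and of the same dimension as $A$. In fact $\mathcal A$ is an abelian subscheme, by the Néron mapping property of $J$, though this is not needed for the statement.
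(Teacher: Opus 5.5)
Your proof is correct. The paper gives no argument of its own here, since the lemma is quoted from \cite[Lemma 4.8]{DHJO26}, so there is no in-text proof to compare with. Your route is the natural one. The relative Abel--Jacobi map $\mathcal C^{(d)}\to\Pic^d_{\mathcal C/R}\simeq J$ is proper, so its image $\mathcal W$ is closed, with fibres $W^0_d(\mathcal C_K)$ and $W^0_d(\mathcal C_k)$ as sets. The translating point extends to an $R$-section by properness, and the closure of $x+A$ is $\bar x+\mathcal A\subseteq\mathcal W$.

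You are right to work set-theoretically on the special fibre: the scheme-theoretic fibre $\mathcal W_k$ may differ from the reduced $W^0_d(\mathcal C_k)$. Note also that your step (2) produces a $k$-rational translate whenever $x$ is $K$-rational, which is what Faltings' theorem provides in the setting of Theorem~\ref{thm:DF}. This is exactly what the enumeration of $\F_2$-rational effective divisors in Proposition~\ref{prop:X37} needs. After the base change in step (1) you would only get a $k'$-rational translate.

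One caution about your closing aside, which you rightly do not use: the closure $\mathcal A$ need not be an abelian subscheme, and the N\'eron mapping property does not show it. That property only extends $A\hookrightarrow J_K$ to a homomorphism from the N\'eron model of $A$ onto $\mathcal A$, and this need not be a closed immersion on the special fibre.

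Here is an example. Let $\mathcal E/R$ have supersingular reduction, and let $H_1\neq H_2$ be finite flat subgroup schemes of order $p$ of $\mathcal E[p]$; these exist after a finite extension of $K$, and both must have special fibre $\ker F$. Then $\mathcal E\to\mathcal E/H_1\times\mathcal E/H_2$ is a closed immersion on the generic fibre but has kernel $\ker F$ on the special fibre. Comparing Euler characteristics of an ample bundle shows that the closure of its image has non-reduced special fibre. So the lemma, like your proof, should be read set-theoretically.
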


\section{The finite candidate lists}\label{sec:candidates}

We now explain how the finite candidate lists are obtained.  Suppose $(m,n)\in \Phiinf(d)$.  The Weil pairing gives $\varphi(m)\mid d$.  We first keep only the pairs satisfying the crude inequality obtained by combining Proposition \ref{prop: gon_den} and \eqref{eq2}, 

\begin{equation}\label{eq:refined-sieve}
\frac{325}{2^{15}}[\PSL_2(\Z):\Gamma_1(m,n)]< \frac{2d}{\varphi(m)}.
\end{equation}
In the rank-zero ranges of \Cref{thm:previous}, we use $d/\varphi(m)$ instead of $2d/\varphi(m)$.  Finally, we apply the restrictions from \Cref{thm:previous}, the cyclic lists for $d=7,8$ and the finiteness of quadratic points on $X(m)$ for $m\geq 7$ to obtain the following result.

\begin{proposition}\label{prop:candidate-lists}
If $(m,n)\in \Phiinf(d)$ for $d\in\set{7,8,9}$, then $(m,n)\in T_d\cup E_d$, where
\begin{align*}
T_7={}&\set{(1,n):1\leq n\leq 30,\ n\notin\set{25,29}}
\cup \set{(2,2n):1\leq n\leq 10},\\
E_7={}&\set{(2,2n):11\leq n\leq 15},\\[3pt]
T_8={}&\set{(1,n):n\in\set{1,\ldots,28,30,32,36}}
\cup \set{(2,2n):1\leq n\leq 12}\\
&\cup \set{(3,3n):1\leq n\leq 4}
\cup \set{(4,4n):1\leq n\leq 3}
\cup \set{(5,5),(6,6)},\\
E_8={}&\set{(2,2n):13\leq n\leq 16}
\cup \set{(2,44),(2,46)}\\
&\cup \set{(3,15),(3,18),(4,16),(5,10),(6,12)},\\[3pt]
T_9={}&\set{(1,n):n\in\set{1,\ldots,28,30,36}}
\cup \set{(2,2n):1\leq n\leq 12},\\
E_9={}&\set{(1,n):n\in \set{29,31,32,33,34,35,37,38,\ldots,46,48,50,53,57,58,63}}\\
&\cup \set{(2,2n):13\leq n\leq 18}
\cup \set{(2,44),(2,46)}.
\end{align*}
\end{proposition}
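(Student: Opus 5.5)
We argue degree by degree, splitting according to $m$. For $(m,n)\in\Phiinf(d)$ the Weil pairing gives $\varphi(m)\mid d$, so $m\in\set{1,2}$ when $d\in\set{7,9}$, and $m\in\set{1,2,3,4,5,6}$ when $d=8$ (the values with $\varphi(m)\in\set{1,2,4,8}$ are $m\in\set{1,2,3,4,5,6,8,10,12,15,16,20,24,30}$). By Lemma \ref{lem:DS}(1), $\XMN{m}{n}/\Kz{m}$ then has infinitely many points of degree $e=d/\varphi(m)$. Proposition \ref{prop: gon_den} gives $\gon_{\C}\XMN{m}{n}\leq \gon_{\Kz{m}}\XMN{m}{n}\leq 2e$, and even $\leq e$ when $\JMN{m}{n}(\Kz{m})$ has rank zero. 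Combining this with \eqref{eq2} yields \eqref{eq:refined-sieve}. Because $[\PSL_2(\Z):\Gamma_1(m,n)]$ grows like $mn^2$ (cf.\ \eqref{eq:crude-index}), for each admissible $m$ only finitely many $n$ survive, and all of them can be listed by direct computation of the index
\[
[\PSL_2(\Z):\Gamma_1(m,n)]=\tfrac{1}{2}\, m\varphi(m)\, n^2\prod_{p\mid n}(1-p^{-2})
\]
for $m\ge 2$ (with the usual $\Gamma_1(n)$ formula for $m=1$, halved for $n\geq 3$). Wherever the pair lies in the rank-zero table of Theorem \ref{thm:previous}(3) or (4), we use the sharper bound with $e$ in place of $2e$.

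Next we prune the surviving finite lists with the remaining parts of Theorem \ref{thm:previous}. For $m=1$ and $d\in\set{7,8}$, part (1) determines exactly which $n$ occur, so the cyclic candidates coincide with the $(1,n)$ part of $T_7$ and $T_8$, and no cyclic excess remains. For $d=9$ no such classification is available, so every cyclic $n$ passing the index sieve goes into $E_9$ unless it already lies in $T_9$. For $m\ge 7$, the only admissible values in degree $8$ are $m\in\set{8,10,12,15,16,20,24,30}$, all with $e\leq 2$. Since $X(m')$ with $m'\mid m$ is covered by $\XMN{m}{n}$, and infinitely many points of degree $\le2$ on $\XMN{m}{n}$ map to points of degree $\le 2$ on $X(m)$ with infinite image (the map has finite fibres), part (2) rules these out. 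The same covering argument, $\XMN{m}{n}\to\XMN{m'}{n'}$ for $m'\mid m$, $n'\mid n$, also lets us discard any $(m,n)$ lying above an already excluded pair of the same degree.

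Finally we verify that what remains is exactly $T_d\cup E_d$ as displayed. The one step needing care is the borderline pairs: those where $\JMN{m}{n}$ has positive or unknown rank, so that only the weak bound $2e$ applies. These are the pairs that populate $E_9$ (for instance $(1,37)$) and $(2,44)$, $(2,46)$, and we simply keep them as excess candidates. This bookkeeping, done systematically by a short computer enumeration over $m$ and $n$ up to the bound from \eqref{eq:crude-index}, is the main obstacle. Its difficulty is not conceptual: we must apply the correct index formula together with the correct rank-zero range for each $m$.
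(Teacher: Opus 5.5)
Your overall route is the paper's. The Weil pairing restricts $m$. Lemma~\ref{lem:DS}(1), combined with the gonality--density inequality and \eqref{eq2}, gives the sieve \eqref{eq:refined-sieve}, with $e$ instead of $2e$ in the rank-zero ranges. Theorem~\ref{thm:previous}(1),(2) then prune the cyclic lists in degrees $7,8$ and the $m\ge 7$ cases in degree $8$.

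\textbf{The index formula is wrong for $m\ge 3$.} You use $\tfrac12 m\varphi(m)n^2\prod_{p\mid n}(1-p^{-2})$, but the correct value is $[\PSL_2(\Z):\Gamma_1(m,n)]=\tfrac12 mn^2\prod_{p\mid n}(1-p^{-2})$, which is what \eqref{eq:crude-index} encodes. Your expression is half the index in $\GL_2(\Z/n\Z)$ of the level structure. That is the degree over the $j$-line of the $\Q$-model of the moduli problem, which has $\varphi(m)$ geometric components. Abramovich's bound applies to a single component, namely the curve over $\Kz{m}$ whose gonality you are bounding.

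With the extra factor the inequality is false. For $\XMN{5}{5}=X(5)$, which has genus $0$, it would give $\gon_\C X(5)>\frac{325}{2^{15}}\cdot 240\approx 2.38$. In your sieve (rank zero, bound $e=2$) this discards $(5,5)$, which lies in $T_8$ and in $\Phiinf(8)$. It also discards $(3,15),(3,18),(4,16),(5,10),(6,12)$. So your final check cannot produce $T_8\cup E_8$, and your degree-$8$ eliminations for $m\ge 3$ rest on an invalid inequality.

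For $m\in\set{1,2}$, and hence for all of degrees $7$ and $9$, your formula is correct. With the correct index, the degree-$8$ survivors for $3\le m\le 6$ are exactly $(3,3k)$ for $k\le 6$, $(4,4k)$ for $k\le 4$, and $(5,5),(5,10),(6,6),(6,12)$, as in $T_8\cup E_8$.

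\textbf{A bookkeeping gap in degree $9$, which affects the paper's one-line proof equally.} With the inputs as stated (range $n\le 42$ for $m=2$ in Theorem~\ref{thm:previous}(3)), the pair $(2,48)$ has index $1536$, and $\frac{325}{2^{15}}\cdot 1536\approx 15.2<18$. So it survives the degree-$9$ sieve. None of your pruning steps removes it: your covering argument would need $(1,48)$ to be excluded in degree $9$ already, and that only happens later via Table~\ref{tab1}. Likewise $(2,50)$ (index $1800$, giving $\approx 17.85<18$) survives unless you observe that $\JMN{2}{50}$ is a quotient of $\JN{100}$, which has rank zero.

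So "what remains is exactly $T_9\cup E_9$" does not literally come out. To obtain the stated inclusion, either add $(2,48)$ to the excess list, or eliminate it directly via $\XMN{2}{48}\to\XN{48}$, using that $\JN{48}(\Q)$ has rank zero and $\gon_\Q\XN{48}\ge 10$.
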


\begin{proof}
This is the output of the sieve using \eqref{eq:refined-sieve} described above.  The only external inputs are those listed in \Cref{thm:previous}. The calculation is elementary and uses only the index formula for $\Gamma_1(m,n)$.
\end{proof}

The resulting candidates naturally partition into an expected list $T_d$, each of whose groups occurs infinitely often, and an excess list $E_d$, for which this is not the case. Which of the groups end up in $T_d$, and which in $E_d$ (and why) will become apparent in the next sections.

\section{Constructing the infinite families}\label{sec:inclusion}

In this section we prove that the expected groups in $T_d$ really occur infinitely often.

\begin{lemma}\label{lem:genus_bound}
    Let $X$ be a nice curve over a number field $K$ with a $K$-rational point, and $d$ an integer. If $d\ge \max\{1,2g\}$, then $X$ has infinitely many points of degree $d.$
\end{lemma}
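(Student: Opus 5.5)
We will prove the statement by producing a $K$-rational map $X\to\PP^1$ of degree exactly $d$, and then using Hilbert irreducibility to get infinitely many degree-$d$ points. Let $P\in X(K)$ and consider the divisor $D=dP$, which is $K$-rational of degree $d$.

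First suppose $g\ge 1$, so that $d\ge 2g$. Since $\deg D\ge 2g-1$, Riemann--Roch gives $\ell(D)=d-g+1\ge 2$. Moreover $D$ is base point free: for every point $Q$ we have $\deg(D-Q)=d-1\ge 2g-1$, hence $\ell(D-Q)=\ell(D)-1$.

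If instead $g=0$, then $X\simeq\PP^1$ over $K$, because it has a rational point, and $\mathcal O(D)=\mathcal O(d)$ is base point free with many sections.

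In either case, choose two $K$-rational sections $s_0,s_1\in H^0(X,\mathcal O(D))$ with no common zero. Then $f=[s_0:s_1]$ is a morphism defined over $K$ of degree exactly $d$.

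To find such a pair, take $s_0$ to be the section with divisor $D$, that is, the constant function $1$, whose zero locus is $\{P\}$. Base point freeness at $P$ gives a section $s_1\in H^0(X,\mathcal O(D))$, i.e.\ a function $h\in L(dP)$, that does not vanish at $P$. Concretely, $h\in L(dP)\setminus L((d-1)P)$ has a pole of exact order $d$ at $P$. Such an $h$ exists over $K$, because the dimension jump $\ell(dP)-\ell((d-1)P)=1$ holds for $d-1\ge 2g-1$, and this jump is preserved under base change, so it is witnessed by a $K$-rational function. When $g=0$ the same choice works with $d\ge 1$. Thus $h\colon X\to\PP^1$ has polar divisor exactly $dP$, and so $\deg h=d$.

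Finally, apply Hilbert's irreducibility theorem to the degree-$d$ cover $h$ of $\PP^1_K$. Since $X$ is geometrically integral, the function field extension $K(X)/K(h)$ is a degree-$d$ separable extension of fields. Hence for infinitely many $t\in\PP^1(K)$ the fiber $h^{-1}(t)$ is a single closed point of degree $d$. Distinct $t$ give distinct points, so $X$ has infinitely many points of degree $d$.

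The main point needing care is the existence of a rational function of exact pole order $d$ at $P$, which needs $d\ge 2g$. We must also ensure that the Hilbert irreducibility argument yields points of degree exactly $d$, rather than merely points of degree at most $d$; the single-closed-point fibers provided by Hilbert irreducibility give this.
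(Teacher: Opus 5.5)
Your proposal is correct and takes essentially the same route as the paper: Riemann--Roch gives $\ell(dP)-\ell((d-1)P)=1$ for $d\ge 2g$ (and trivially for $g=0$), so some $f\in L(dP)\setminus L((d-1)P)$ has polar divisor exactly $dP$, giving a degree-$d$ map to $\PP^1_K$, and Hilbert irreducibility finishes. The detour through base point freeness and pairs of sections is harmless but unnecessary, since the function $h$ alone already defines the map.
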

\begin{proof}
Let \(P\in X(K)\). We first show that there is a nonconstant rational
function \(f\in K(X)\) whose polar divisor is exactly \(dP\).

For \(g=0\), this is obvious as there are functions with a unique pole of any prescribed positive order.

Now suppose \(g\ge 1\), and let \(d\ge 2g\). By Riemann--Roch, both divisors
\(dP\) and \((d-1)P\) are nonspecial, since their degrees are greater than
\(2g-2\). Hence
\[
\ell(dP)=d+1-g \quad
\text{ and } \quad
\ell((d-1)P)=d-g.
\]
Therefore
\(
\ell(dP)>\ell((d-1)P).
\)
So we may choose
\(
f\in L(dP)\setminus L((d-1)P).
\)
This means that \(f\) has no poles away from \(P\), and has a pole of exact
order \(d\) at \(P\). Thus
\(
(f)_\infty=dP.
\)
Hence \(f\) defines a morphism
\(
f:X\longrightarrow \mathbb P^1_K
\)
of degree \(d\).

It now follows from Hilbert's Irreducibility Theorem (see \cite[Chapter 9]{Serre1997}) that $X$ has infinitely many points of degree $d$.
\end{proof}

\begin{proposition}\label{prop:Td-included}
For $d\in\set{7,8,9}$, every group in $T_d$ belongs to $\Phiinf(d)$.
\end{proposition}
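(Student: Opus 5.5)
By Lemma~\ref{lem:DS}(2), it suffices to exhibit, for each $(m,n)\in T_d$, a $\Kz{m}$-rational map $\XMN{m}{n}\to\PP^1$ of degree exactly $d/\varphi(m)$. The reduction to maps of \emph{exact} degree is what makes a case split necessary. If such a map exists with smaller degree, we cannot simply compose it with a map $\PP^1\to\PP^1$ unless the degree divides. Instead we will rely on the principle behind Lemma~\ref{lem:genus_bound}: for a curve with a rational point, it is enough to find a function whose polar divisor has degree exactly $d/\varphi(m)$. So the plan is to handle the easy cases uniformly and to construct the remaining cases by hand.

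\textbf{Step 1: low-genus and cyclic cases.} For $(1,n)$ with $d=7,8$, the curve $\XN{n}$ has infinitely many degree-$d$ points precisely for the listed $n$, by \Cref{thm:previous}(1). This is not directly membership in $\Phiinf(d)$, since we also need infinitely many $j$-invariants. However, the degree-$d$ points constructed there come from maps to $\PP^1$, which give infinitely many $j$-values by Hilbert irreducibility. Alternatively we reprove them alongside the cases below. For every pair with $2g(\XMN{m}{n})\le d/\varphi(m)$, Lemma~\ref{lem:genus_bound} applies immediately, using a rational cusp as the base point; this disposes of all small levels. Curves $\XMN{m}{n}$ of genus $0$ and $1$ also admit maps of every degree $\ge 2$ directly.

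\textbf{Step 2: modular units for the remaining cases.} For the remaining groups we will search for a modular unit on $\XMN{m}{n}$ over $\Kz{m}$ of degree exactly $d/\varphi(m)$. These are products of Siegel/Kubert--Lang functions, and their divisors are supported on the cusps. The remaining groups are the larger levels: $(1,n)$ with $n$ near $28$--$36$ in degree $9$; $(2,2n)$ with $n$ up to $12$; $(3,3n)$, $(4,4n)$ and $(5,5)$, $(6,6)$ in degree $8$. The search is a small integer linear program. The unknowns are exponent vectors on a basis of the cuspidal units. The constraints are that the divisor is integral and that the total positive degree equals the target $d/\varphi(m)$, with rational cusps of degree one counted with multiplicity. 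Where no unit of the right degree exists but one of smaller degree $e$ exists, we try to combine a degree-$e$ unit with another function. For example, we can take $f+g$ with disjoint poles to reach degree $e+e'=d/\varphi(m)$. Another option is to apply the pole-order argument of Lemma~\ref{lem:genus_bound} to an effective cuspidal divisor $D$ of degree $d/\varphi(m)$ satisfying $\ell(D)>\ell(D-P)$ for each $P\in\operatorname{supp}D$. For a degree-$9$ group $(1,n)$, a degree-$9$ map could come from a degree-$3$ map composed with a cubic map. Degree-$7$ and $8$ maps to $\PP^1$ can be obtained analogously from maps of degree $1$ or of degrees dividing $8$.

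\textbf{Main obstacle.} The hardest cases are those where $d/\varphi(m)$ is close to the gonality. Examples are $(1,n)$ for the large $n$ in degree $9$ and $(2,24)$ in degrees $8$ and $9$. There, the modular unit of exact degree may fail to exist and Riemann--Roch gives no slack. In those cases I would first compute $\gon_{\Q}$ via an explicit cuspidal Riemann--Roch space in Magma. I would then look for a cuspidal divisor $D$ of the target degree whose Riemann--Roch space contains a function with polar divisor exactly $D$, checking this by computing $\ell(D)$ and $\ell(D-P)$.
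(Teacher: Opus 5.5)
Your proposal follows essentially the paper's proof: reduce via Lemma~\ref{lem:DS}(2) to exhibiting a $\Kz{m}$-rational function of degree exactly $d/\varphi(m)$, handle the small cases with Lemma~\ref{lem:genus_bound}, and find the rest as modular units by a short-vector search in the lattice of cuspidal valuation vectors (the paper uses \texttt{has\_modular\_unit\_of\_degree} from \texttt{mdsage}). In the paper that search succeeds for every pair in $T_d$, including $(1,36)$ and $(2,24)$ in degree $9$, so your fallbacks and the detour through Theorem~\ref{thm:previous}(1) are never needed; also, the real obstruction in that detour is that the torsion group might be strictly larger than $(1,n)$ at all but finitely many of the points, not the number of $j$-invariants, which is infinite automatically because $j$ has finite fibers.
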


\begin{proof}
By Lemma~\ref{lem:DS}, it is enough to construct, for every $(m,n)\in T_d$, a function
\[
f\in \Kz{m}(\XMN{m}{n})
\]
of degree $d/\varphi(m)$.

For some of the smaller pairs one can avoid computation by using Lemma \ref{lem:genus_bound}. For the remaining cases, we do this using modular units.  Start with a generating set $f_1,\ldots,f_r$ of modular units on $\XMN{m}{n}$.  Taking valuations at the cusps embeds the group of modular units modulo constants into a lattice:
\[
f_i\longmapsto \bigl(v_{C_1}(f_i),\ldots,v_{C_c}(f_i)\bigr)\in \Z^c.
\]
If $f=f_1^{e_1}\cdots f_r^{e_r}$, then the degree of $f$ is one half of the $\ell^1$-norm of the corresponding valuation vector.  Thus finding a modular unit of prescribed degree is a short-vector search in this lattice.

We implemented this search using Maarten Derickx's \texttt{mdsage} package.  In the notation of that package, the computation for $\XMN{m}{n}$ is reduced to the command
\[
\texttt{has\_modular\_unit\_of\_degree(Gamma11(m,n/m), d/euler\_phi(m))}.
\]
For every pair $(m,n)\in T_d$ and $d\in\set{7,8,9}$, the search returns a modular unit of degree $d/\varphi(m)$.  Hilbert irreducibility and Lemma~\ref{lem:DS} therefore imply $(m,n)\in \Phiinf(d)$.

\end{proof}

\section{Eliminating the excess candidates}\label{sec:excess}

It remains to prove that no element of $E_7$, $E_8$ or $E_9$ occurs infinitely often. We use the following elementary specialization principle throughout: if a curve $X/K$ has good reduction at a prime $\fp$ of $K$, then
\begin{equation}\label{eq:specialization-gonality}
\gon_K(X)\geq \gon_{\F_\fp}(X_\fp).
\end{equation}
Indeed, a function on $X$ of degree $n$ specializes to a function of degree at most $n$ on the reduction.

\subsection{Rank-zero candidates and finite-field gonalities}

The following table lists the finite-field gonality computations used in the rank-zero cases.  The column $p$ denotes the rational prime below the chosen prime $\fp$ of the field of definition.  The column $\gamma$ is the computed value or lower bound for $\gon_{\F_\fp}(X_\fp)$. Applying \eqref{eq:specialization-gonality} we get $\gon_{\Q(\zeta_m)}X_1(m,n)\geq \gamma$.

\begin{longtable}{@{}cccc|@{\quad}cccc@{}}
\caption{Gonality lower bounds}
\label{tab1}\\
\toprule
Curve & $p$ & $\gamma$ & time (h)
&
Curve & $p$ & $\gamma$ & time (h)\\
\midrule
\endfirsthead

\toprule
Curve & $p$ & $\gamma$ & time (h)
&
Curve & $p$ & $\gamma$ & time (h)\\
\midrule
\endhead

$\XMN{2}{22}$ & $3$  & $8$        & $0.009$
&
$\XMN{2}{32}$ & $3$  & $\geq 10$  & $0.340$\\

$\XMN{2}{24}$ & $7$  & $8$        & $0.170$
&
$\XMN{2}{34}$ & $3$  & $\geq 10$  & $0.790$\\

$\XMN{2}{26}$ & $3$  & $12$       & $0.384$
&
$\XMN{2}{36}$ & $5$  & $\geq 10$  & $1.888$\\

$\XMN{2}{28}$ & $3$  & $12$       & $0.325$
&
$\XN{41}$     & $2$  & $\geq 10$  & $0.364$\\

$\XMN{3}{15}$ & $2$  & $6$        & $0.001$
&
$\XN{42}$     & $5$  & $\geq 10$  & $0.981$\\

$\XMN{3}{18}$ & $7$  & $6$        & $0.004$
&
$\XN{44}$     & $3$  & $\geq 10$  & $0.322$\\

$\XMN{4}{16}$ & $5$  & $8$        & $0.006$
&
$\XN{45}$     & $2$  & $\geq 10$  & $0.336$\\

$\XMN{5}{10}$ & $11$ & $3$        & $<0.001$
&
$\XN{46}$     & $3$  & $\geq 10$  & $0.831$\\

$\XMN{6}{12}$ & $7$  & $6$        & $0.009$
&
$\XN{48}$     & $5$  & $\geq 10$  & $1.160$\\

$\XMN{2}{30}$ & $7$  & $\geq 10$  & $3.902$
&
$\XN{50}$     & $3$  & $\geq 10$  & $0.932$\\

\bottomrule
\end{longtable}

All curves in the table have rank-zero Jacobian over their field of definition.  Consequently, Proposition~\ref{prop: gon_den} and \eqref{eq:specialization-gonality} imply that the density degree is at least $\gamma$. This allows us to eliminate most of the groups in the sets of excess candidates $E_d$.

\begin{corollary}\label{cor:table-eliminations}
\begin{enumerate}
\item $E_7\cap \Phiinf(7)=\emptyset$
\item $E_8\cap \Phiinf(8)\subseteq \{(2,44),(2,46)\}$;
\item None of the groups in \( 
\set{(2,2n):13\leq n\leq 18}
\cup \set{(1,N):N\in\set{41,42,44,45,46,48,50}}\) are in $\Phiinf(9).$
\end{enumerate}
\end{corollary}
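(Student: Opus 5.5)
The argument reduces to bookkeeping: for each group in the relevant excess list, find the corresponding row of Table~\ref{tab1} and compare $\gamma$ with the degree $d/\varphi(m)$ required by Lemma~\ref{lem:DS}(1). The general mechanism is as follows. If $(m,n)\in\Phiinf(d)$, then $\XMN{m}{n}/\Kz{m}$ has infinitely many points of degree $d/\varphi(m)$, so $\delta(\XMN{m}{n}/\Kz{m})\le d/\varphi(m)$. Every curve in the table has $\rk \JMN{m}{n}(\Kz{m})=0$, either by Theorem~\ref{thm:previous}(3),(4) or by direct citation. Hence Proposition~\ref{prop: gon_den} gives $\delta=\gon_{\Kz{m}}$. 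Choosing a prime $\fp$ of good reduction above the listed $p$ (all listed $p$ are coprime to the level $n$) and applying \eqref{eq:specialization-gonality}, we get $\delta\ge\gamma$. So it suffices to check $\gamma> d/\varphi(m)$ in each case.

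For (1), $E_7=\set{(2,2n):11\le n\le 15}$ and $d/\varphi(2)=7$. The table gives $\gamma=8,8,12,12,\ge10$ for $n=11,\dots,15$, all of which exceed $7$.

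For (2), $d=8$. For $(2,2n)$ with $13\le n\le16$ the required degree is $8$, while $\gamma=12,12,\ge10,\ge10$. For $(3,15)$ and $(3,18)$ the field is $\Q(\zeta_3)$ and the required degree is $8/2=4<6$. For $(4,16)$ it is $4<8$, for $(5,10)$ it is $8/4=2<3$, and for $(6,12)$ it is $4<6$. The two groups $(2,44)$ and $(2,46)$ have no entry in the table and are left over, which is exactly the stated inclusion.

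For (3), $d=9$. For $(2,2n)$ with $13\le n\le 18$ the required degree is $9$, and the table gives $\gamma\ge 10$ for $\XMN{2}{26},\dots,\XMN{2}{36}$, namely $12,12,\ge10,\ge10,\ge10,\ge10$. For $(1,N)$ with $N\in\set{41,42,44,45,46,48,50}$ the required degree is $9$ over $\Q$, and the table gives $\gamma\ge10$. The rank-zero input for $\XN{N}$ comes from Theorem~\ref{thm:previous}(4), since all these $N$ appear in that list.

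The only real obstacle is not logical but computational: it lies in the correctness of the finite-field gonality lower bounds, which are taken as given from the table. One must also confirm that each listed $p$ gives good reduction, i.e.\ $p\nmid n$, and that it splits suitably in $\Kz{m}$ when the residue field is taken to be $\F_p$. The latter holds for instance for $p=7$ with $m=3,6$, for $p=5$ with $m=4$, and for $p=11$ with $m=5$. For $(3,15)$ with $p=2$ the residue field is $\F_4$, and the bound $\gamma=6$ is still valid since gonality over $\F_4$ bounds the gonality over $\Kz{3}$ from below.
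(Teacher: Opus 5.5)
Your proposal is correct and follows the paper's argument. Rank zero of $\JMN{m}{n}(\Kz{m})$ (Theorem~\ref{thm:previous}(3),(4)) upgrades Proposition~\ref{prop: gon_den} to $\delta=\gon_{\Kz{m}}$, specialization at the listed good prime gives $\delta\ge\gamma$, and the case-by-case check $\gamma>d/\varphi(m)$ against Table~\ref{tab1}, with your degree counts all correct, is exactly what the paper does (its additional elimination of cyclic $N\le 40$ via Derickx--van Hoeij gonalities lies outside this statement).
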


\begin{proof}
For a candidate $(m,n)$ in degree $d$, Lemma~\ref{lem:DS} would require infinitely many points of degree $d/\varphi(m)$ on $\XMN{m}{n}/\Kz{m}$.  In each case listed above, Table \ref{tab1} gives
\[
\delta\bigl(\XMN{m}{n}/\Kz{m}\bigr)>\frac{d}{\varphi(m)}.
\]
This contradiction eliminates the candidate.

For the remaining cyclic candidates with $N\leq 40$, except $N=32$ and $N=37$, we use the known gonalities of Derickx and van Hoeij \cite[Table 1]{derickxVH}.  
\[
\begin{array}{c|cccccccc}
N&29&31&33&34&35&38&39&40\\ \hline
\gon_\Q \XN{N}&11&12&10&10&12&12&14&12.
\end{array}
\]
Since $\rk J_1(n)(\Q)=0$ by Theorem \ref{thm:previous} (4), this rules out infinitely many degree $9$ points for all the $X_1(N)$ listed above. 

\end{proof}

The only cyclic candidates with $N\leq 40$ that remain after this step are $N=32$ and $N=37$ in degree $9$.

\begin{remark}
    For some cases we can avoid computations by the Castelnuovo-Severi inequality (see e.g. \cite[Theorem 3.11.3]{Stichtenoth09}). Let $f$ be the forgetful degree 2 map $f:X_1(2,30)\rightarrow X_1(30)$ and suppose $h:X_1(2,30)\rightarrow \PP^1$ is a map of degree $d$. As we have $g(X_1(2,30))=25$, and $g(X_1(30))=9.$ The Castelnuovo-Severi inequality now implies that either 
    $$25=g(X_1(2,30))\leq \deg f \cdot g(X_1(30)) + d \cdot g(\PP^1)+(d-1)(\deg f -1),$$
    implying $d \geq 8$, or $h$ has to factor through $X_1(30).$ As $\gon_\Q X_1(30)\geq 5$ (see \cite{DerickxSutherland}), we obtain in any case $\gon_\Q X_1(2,30)\geq 8,$ eliminating $(2,30)$ in degree $7$.
\end{remark}

\begin{remark}
    We note that some parts of Corollary \ref{cor:table-eliminations} ($(2,2n)$ for $n=11,13,15$ in degree 7) have also been proved in \cite[Corollary 4.21 (1)]{noordman2017gonality} using different methods. 
\end{remark}

\subsection{The maps $\XMN{2}{44}\to \XN{44}$, $\XMN{2}{46}\to \XN{46}$ and $\XN{58}\to \XN{29}$}

If $X\to Y$ is a finite map over a number field $K$, then infinitely many points of degree $d$ on $X$ would give infinitely many points of degree at most $d$ on $Y$.  Hence finiteness of degree at most $d$ points on $Y$ implies the corresponding finiteness statement on $X$.

The natural maps
\[
\XMN{2}{44}\longrightarrow \XN{44},\qquad
\XMN{2}{46}\longrightarrow \XN{46}
\]
eliminate $(2,44)$ and $(2,46)$ in degrees $8$ and $9$, because $\XN{44}$ and $\XN{46}$ have rank-zero Jacobians and finite-field gonality at least $10$ by the table above.

Similarly, the natural map
\[
\XN{58}\longrightarrow \XN{29}
\]
eliminates $(1,58)$ from the degree $9$ list, because $\XN{29}$ has rank-zero Jacobian and $\gon_\Q\XN{29}=11$.

\subsection{The cases $(1,43)$, $(1,53)$ and $(1,57)$}

We next eliminate the degree $9$ candidates $(1,43)$, $(1,53)$ and $(1,57)$.

\begin{proposition}\label{prop:cover-search}
For $N\in\set{43,53,57}$,
\[
\gon_{\F_2}\XN{N}\geq 19.
\]
Consequently, $\XN{N}/\Q$ has only finitely many points of degree $9$.
\end{proposition}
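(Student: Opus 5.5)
Proving the gonality bound $\gon_{\F_2}\XN{N}\geq 19$ splits into a finite-field computation and a deduction of the finiteness statement. We start with the deduction, which is formal. The primes $N\in\set{43,53,57}$ are coprime to $2$, so $\XN{N}$ has good reduction at $2$. By \eqref{eq:specialization-gonality}, $\gon_\Q\XN{N}\geq \gon_{\F_2}\XN{N}\geq 19$. Proposition~\ref{prop: gon_den} then gives $\delta(\XN{N}/\Q)\geq \tfrac12\gon_\Q\XN{N}\geq 19/2$, so $\delta\geq 10>9$. Hence there is no infinite family of degree $9$ points. This works regardless of the rank of $J_1(N)(\Q)$, which matters because $43,53,57$ are absent from the rank-zero list of Theorem~\ref{thm:previous}(4). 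Equivalently, Theorem~\ref{thm:DF} applies: alternative (1) is excluded by the gonality bound, and alternative (2) would give $\gon\leq 2\cdot 9=18$ via the proof of Proposition~\ref{prop: gon_den}.

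The real content is the bound $\gon_{\F_2}\XN{N}\geq 19$. The curves have large genus ($g(X_1(43))=57$, $g(X_1(53))=92$, $g(X_1(57))=97$), so a direct Riemann--Roch search over all effective divisors of degree $\leq 18$ is hopeless. The section title suggests exploiting covering maps. We use $\XN{N}\to X_H$, where $X_H$ is the quotient by a subgroup $H\subseteq (\Z/N\Z)^\times/\pm1$ of diamond operators, and in particular maps to $X_0(N)$ or intermediate curves.

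The main tool is the Castelnuovo--Severi inequality. Suppose $h:\XN{N}_{\F_2}\to\PP^1$ has degree $d\leq 18$, and let $\pi:\XN{N}\to X_H$ have degree $e$. Then either $g(\XN{N})\leq e\,g(X_H)+(d-1)(e-1)$, or $h$ factors through $\pi$. For suitable $H$, the first inequality fails for $d\leq 18$ because $g(\XN{N})$ is large compared with $e\,g(X_H)$. So $h$ must factor through $X_H$, and then $d\geq e\cdot\gon_{\F_2}X_H$. We choose $H$ of small index so that a finite-field gonality computation on $X_H$ (feasible thanks to the Derickx--Terao methods) gives $\gon_{\F_2}X_H>18/e$.

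The natural choice for $N=43$ is the intermediate curve of degree $3$ or $7$ under $X_1(43)$, since $21=3\cdot7$. For $N=53$ ($26=2\cdot13$) it is the degree $2$ or $13$ quotient, and for $N=57$ one can also use the map to $X_1(19)$. When no single quotient passes Castelnuovo--Severi, we combine two quotients: if $h$ factors through both, it factors through their common quotient, which forces a large degree.

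The main obstacle is this balancing act. The quotient must be small enough for Castelnuovo--Severi to force factorization, yet large enough that its $\F_2$-gonality exceeds $18/e$. If that fails, the fallback is the Derickx--Terao-style computation of $W^0_d$ over $\F_2$ on a quotient curve, enumerating the $\F_2$-rational effective divisor classes of degree $\leq 18/e$ with $\ell\geq2$.
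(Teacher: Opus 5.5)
Your deduction of the finiteness statement from the gonality bound is correct and matches the paper's argument. It uses good reduction at $2$ (because $N$ is odd; note that $57$ is not prime), then $\gon_\Q\XN{N}\ge 19$ and $\gon\le 2\delta$ give $\delta\ge 10$, with no rank hypothesis needed.

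\textbf{The gap.} It lies in the proof of $\gon_{\F_2}\XN{N}\ge 19$. Castelnuovo--Severi never forces a map of degree near $18$ to factor through a quotient. Let $\pi:\XN{N}\to Y$ have degree $e$, and let $R=(2g(\XN{N})-2)-e(2g(Y)-2)$ be the Riemann--Hurwitz defect. Then the bound $g(\XN{N})\le e\,g(Y)+(e-1)(d-1)$ fails, which is what forces factorization, only when $R>2(e-1)d$. The diamond quotients here are unramified over the cusps and ramify only above the few elliptic points, so $R$ is far too small. Concretely:
\begin{itemize}
\item $N=43$: the degree-$3$ quotient has genus $15$ and $R=28$, so Castelnuovo--Severi gives no conclusion for any $d\ge 7$. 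The degree-$7$ quotient (genus $9$) and $X_0(43)$ (genus $3$) give nothing for any $d$.
\item $N=53$: the degree-$2$ quotient has genus $40$ and $R=26$, so there is no conclusion for $d\ge 13$. The degree-$13$ quotient (genus $8$) gives nothing.
\item $N=57$: the genus of $\XN{57}$ is $85$, not $97$. The degree-$2$ quotient is unramified of genus $43$, so $85\le 86+(d-1)$ holds for every $d$. The map $\XN{57}\to\XN{19}$ (degree $8$, genus $7$) forces factorization only for $d\le 5$.
\end{itemize}
So a putative map of degree $\le 18$ need not factor through any quotient. ``Combining two quotients'' presupposes factorization through each of them, and your fallback of computing $W^0_d$ on a quotient curve is circular for the same reason.

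\textbf{The missing idea.} You need to work directly on $\XN{N}_{\F_2}$ and use the finite field to cut the divisor search down. The paper observes that $\XN{43}_{\F_2}$ has $21$ rational places while $\PP^1(\F_2)$ has only $3$ points. Hence some fiber of a putative $f$ of degree $\le 18$ contains at least $7$ rational places. Moving that fiber to $\infty$, the polar divisor of $f$ is an effective divisor of degree $\le 18$ whose support contains at least $7$ rational places. Using the diamond operators, the paper builds a covering set $\mathcal T$ of effective divisors such that every such polar divisor $D$ satisfies $\diamondop{a}D\le T$ for some $T\in\mathcal T$. Then $L(\diamondop{a}D)\subseteq L(T)$, and one checks that no $L(T)$ contains a function of degree $\le 18$. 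Here $|\mathcal T|=971,329,308$ for $N=43,53,57$, with tens of hours of computation; $N=53$ and $57$ are handled by the same method. The word ``cover'' in that argument refers to this covering set of divisors, not to covering maps of curves.
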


\begin{proof}
The second assertion follows from \eqref{eq:specialization-gonality} and Proposition~\ref{prop: gon_den}; if $\gon_\Q\XN{N}\geq 19$, then $\delta(\XN{N}/\Q)\geq 10$.

We describe the computation proving the finite-field lower bound.  Suppose first that $X=\XN{43}/\F_2$ admitted a map $f:X\to \PP^1$ of degree at most $18$.  The curve $X$ has $21$ rational places over $\F_2$, while $\PP^1(\F_2)$ has $3$ points.  Hence at least $7$ rational places must lie in a single fiber of $f$.  After composing with an automorphism of $\PP^1$, this fiber may be taken to be the fiber above $\infty$.  Thus the pole divisor of $f$ is an effective divisor of degree at most $18$ whose support contains at least $7$ rational places.

A direct enumeration of all such divisors is too large.  Instead, we construct a covering set $\mathcal T$ of effective divisors with the property that for every effective divisor $D$ of degree $18$ whose support contains at least $7$ rational places, there are $T\in \mathcal T$ and a diamond operator $\diamondop{a}$ such that
\[
\diamondop{a}D\leq T.
\]
It follows that $L(\diamondop{a}D)\subseteq L(T)$.  We then compute each Riemann--Roch space $L(T)$ and check that it contains no function of degree at most $18$.

The same method applies to $N=53$ and $N=57$. The covering-set sizes and runtimes were as follows.
\[
\begin{array}{c|c|c|c}
\text{Curve}&\text{genus}&|\mathcal T|&\text{time}\\ \hline
\XN{43}&57&971&5.53\text{ h}\\
\XN{53}&92&329&55.81\text{ h}\\
\XN{57}&85&308&43.58\text{ h}.
\end{array}
\]
In all three cases the search found no function of degree at most $18$, proving the desired lower bound.
\end{proof}

\subsection{The group $(1,63)$}

The curve $\XN{63}$ is handled by mapping to a lower-genus intermediate modular curve.  Let
\[
\DeltaGrp=\set{\pm 1,\pm 20,\pm 22}\leq (\Z/63\Z)^\times.
\]
There is a natural sequence of morphisms
\[
\XN{63}\longrightarrow X_{\DeltaGrp}(63)\longrightarrow X_0(63).
\]
The curve $X_{\DeltaGrp}(63)$ has genus $25$ and rank-zero Jacobian over $\Q$.  Using Zywina's Modular Magma package (see \cite{Zywina25}), a model can be obtained from the congruence subgroup of $\GL_2(\Z/63\Z)$ generated by
\[
\begin{pmatrix}1&1\\0&19\end{pmatrix},\qquad
\begin{pmatrix}20&30\\0&1\end{pmatrix},\qquad
\begin{pmatrix}43&54\\0&2\end{pmatrix}.
\]
A gonality computation using the algorithm from \cite{DerickxTerao} gives
\[
\gon_{\F_2} X_{\DeltaGrp}(63)=12.
\]
Since the Jacobian has rank zero over $\Q$, $X_{\DeltaGrp}(63)$ has only finitely many points of degree at most $11$ over $\Q$.  Therefore $\XN{63}$ has only finitely many degree $9$ points.  This eliminates $(1,63)$.

\subsection{The special case of $\XN{32}$}

The curve $\XN{32}$ requires a separate argument.  Its Jacobian has rank 0 over $\Q$ and there is a degree $8$ modular unit on $\XN{32}$, so $\XN{32}$ has infinitely many points of degree $8$.  However, we need to rule out infinitely many points of degree $9$.

We use the following criterion of Derickx and van Hoeij.

\begin{proposition}\label{prop:DvH-criterion}
Let $C/\Q$ be a smooth projective curve with $C(\Q)\neq\emptyset$ and good reduction at a prime $p$.  Let $W^r_d(K)$ denote the closed subscheme of $\Pic^d(C)(K)$ corresponding to line bundles $L$ of degree $d$ with
\[
\dim_K H^0(C,L)\geq r+1.
\]
Suppose that:
\begin{enumerate}
\item $J(C)(\Q)\to J(C)(\F_p)$ is injective;
\item $\F_p(C)$ contains no functions of degree $d$;
\item $W^2_d(\F_p)=\emptyset$;
\item $W^1_{d-i}(\Q)\to W^1_{d-i}(\F_p)$ is surjective for all $1\leq i\leq d-\gon_{\F_p}(C)$;
\item $C^{(i)}(\Q)\to C^{(i)}(\F_p)$ is surjective for all $1\leq i\leq d-\gon_{\F_p}(C)$.
\end{enumerate}
Then $\Q(C)$ contains no functions of degree $d$.
\end{proposition}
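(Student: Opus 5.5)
We argue by contradiction: suppose $f\in\Q(C)$ has degree $d$, and let $D=(f)_\infty$, an effective divisor of degree $d$ over $\Q$ with $\ell(D)\geq 2$. Because $C$ has good reduction at $p$, we may take a smooth proper model over $\Z_{(p)}$. The divisor $D$ extends to a relative effective divisor, and its reduction $\bar D$ is an effective $\F_p$-rational divisor of degree $d$. Semicontinuity of $h^0$ gives $\ell(\bar D)\geq \ell(D)\geq 2$, so $[\bar D]\in W^1_d(\F_p)$. Hypothesis (3) then forces $\ell(\bar D)=2$, and semicontinuity in turn forces $\ell(D)=2$. Write $L(\bar D)=\langle 1,\bar g\rangle$. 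The base locus $B$ of the pencil $|\bar D|$ is effective over $\F_p$, and $\bar g$ has polar divisor $\bar D-B$, whose degree is $d-\deg B$. By (2) the degree is not $d$, so $\deg B=i\geq 1$. Since $\bar g$ is nonconstant, $d-i\geq \gon_{\F_p}(C)$, which gives $1\leq i\leq d-\gon_{\F_p}(C)$. This is exactly the range covered by hypotheses (4) and (5).

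Next we lift the decomposition $\bar D=B+(\bar D-B)$ to characteristic $0$. By (5), the effective divisor $B\in C^{(i)}(\F_p)$ is the reduction of some $B'\in C^{(i)}(\Q)$. The class $[\bar D-B]$ lies in $W^1_{d-i}(\F_p)$, so by (4) it is the reduction of some class $M\in W^1_{d-i}(\Q)$. Here we use $C(\Q)\neq\emptyset$, which makes $\Pic^{d-i}$ classes over $\Q$ representable by $\Q$-rational divisors, so $M=[E]$ with $E$ effective over $\Q$. The classes $[D]$ and $[B'+E]$ both lie in $\Pic^d(C)(\Q)$, and they have the same reduction. Subtracting a multiple of a rational point moves both into $J(C)(\Q)$, and injectivity (1) gives $[D]=[B'+E]$. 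Hence $D\sim B'+E$ over $\Q$.

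Finally we compare linear systems. We have $\ell(D)=2$, and $\ell(E)\geq 2$ since $[E]\in W^1_{d-i}$. Adding $B'$ embeds $|E|$ into $|D|$, so $|D|=B'+|E|$ and $B'$ lies in the base locus of $|D|$. But $D$ is the polar divisor of $f$ with $\ell(D)=2$, so $L(D)=\langle 1,f\rangle$. The pencil $|D|$ contains $D=(f)_\infty$ and the divisor of zeros of $f$, which are disjoint. Therefore $|D|$ is base point free, contradicting $\deg B'=i\geq 1$.

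The main obstacle is the first step, namely making rigorous that reduction gives $\ell(\bar D)\geq\ell(D)$ together with the bookkeeping that the base locus $B$ is $\F_p$-rational. Both follow from upper semicontinuity of cohomology for the line bundle $\mathcal O(\mathcal D)$ on the model, and from the fact that the base locus of an $\F_p$-rational pencil is Galois-stable. The remaining steps are formal manipulations with hypotheses (1)–(5).
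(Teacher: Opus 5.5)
Your argument is correct and follows the same line as the proof the paper relies on. The paper gives no proof of its own beyond citing \cite[Proposition 7]{derickxVH}, and that proof runs as yours does: reduce the polar divisor $D$ of $f$, use (3) and (2) to see that $|\bar D|$ is a pencil with a nonzero $\F_p$-rational base locus of degree $i$ in the stated range, lift both pieces via (4) and (5), use (1) to get $D\sim B'+E$ over $\Q$, and conclude that $B'$ lies in the base locus of the base-point-free pencil $|D|$, which is a contradiction.
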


\begin{proof}
This is \cite[Proposition 7]{derickxVH}.
\end{proof}

\begin{lemma}\label{lem:X32}
The curve $\XN{32}/\Q$ has only finitely many points of degree $9$.
\end{lemma}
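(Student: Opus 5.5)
Since $J_1(32)(\Q)$ has rank zero (\Cref{thm:previous}(4)), \Cref{prop: gon_den} gives $\delta(\XN{32}/\Q)=\gon_\Q \XN{32}$, i.e.\ the curve has infinitely many points of degree $d$ if and only if it carries a $\Q$-rational function of degree exactly $d$. More precisely, in rank zero $W^0_9(\XN{32})(\Q)$ is finite, so infinitely many degree $9$ points force some fibre of the Abel--Jacobi map $\XN{32}^{(9)}\to W^0_9$ to be positive-dimensional. That fibre is a linear system $|D|$ with $\ell(D)\geq 2$ and $D$ of degree $9$. Such a system gives a pencil whose moving part is a function of degree at most $9$; we must show the moving part has degree exactly $9$, or else exclude the base-point case directly. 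So it suffices to show that there is no base-point-free degree-$9$ pencil, i.e.\ that $\Q(\XN{32})$ contains no function of degree $9$. The genuinely separate task is to handle pencils of degree $8$ plus a rational base point, which give only finitely many degree $9$ points (those of the form $P+$fibre are not degree $9$ points). Hence the statement reduces to: no function of degree $9$ in $\Q(\XN{32})$.

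To prove this I will apply \Cref{prop:DvH-criterion} with $C=\XN{32}$, $d=9$, and a small good prime such as $p=3$ (or $p=5$, whichever makes conditions (4),(5) cheapest). The steps, in order, are as follows.
\begin{enumerate}
\item Check injectivity of $J_1(32)(\Q)\to J_1(32)(\F_p)$. The rational torsion is generated by cuspidal classes, whose order is known, so it suffices to compare the cuspidal subgroup with its reduction, which is injective for odd $p$ on torsion of order prime to $p$.
\item Compute with the Derickx--Terao algorithm that $\F_p(\XN{32})$ has no function of degree $9$. This also yields $\gon_{\F_p}=8$, so that conditions (4),(5) are only needed for $i=1$.
\item Verify $W^2_9(\F_p)=\emptyset$ by enumerating effective degree-$9$ divisor classes and checking that $\ell\leq 2$.
\item Check that every $\F_p$-rational point of $\XN{32}$ lifts to $\Q$. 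Presumably all of $\XN{32}(\F_p)$ consists of reductions of rational cusps; if not, switch primes.
\item Check that every element of $W^1_8(\F_p)$ (the degree-$8$ gonal classes) is the reduction of a $\Q$-rational class, e.g.\ one coming from the degree-$8$ modular units and their diamond translates.
\end{enumerate}

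The main obstacle is step (5), the surjectivity onto $W^1_8(\F_p)$. The mod-$p$ curve may have extra degree-$8$ pencils not coming from $\Q$, and if so I would try another prime. Steps (2) and (3) are heavy but routine finite-field computations, cut down by the diamond operator action.
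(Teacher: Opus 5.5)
Your proposal is correct and follows the paper's proof: reduce via rank zero to the nonexistence of a $\Q$-rational function of degree $9$, then apply Proposition~\ref{prop:DvH-criterion} at $p=3$ using $\gon_{\F_3}\XN{32}=8$ and lifting the degree-$8$ pencils to reductions of $\Q$-rational cuspidal divisors. The paper cites Theorem~\ref{thm:DF} for the reduction; your Abel--Jacobi argument is cleanest as: two distinct degree-$9$ points in one class give $P_1-P_2=\mathrm{div}(f)$ with $\deg f=9$. The paper avoids enumerating degree-$9$ classes for $W^2_9(\F_3)=\emptyset$ by checking that the $56$ degree-$8$ pole divisors have $\ell=2$ and that no effective divisor of degree below $12$ dominates two of them.

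For condition (1), drop the claim that the rational torsion is generated by cuspidal classes, which you have not justified and do not need. Rank zero makes $\JN{32}(\Q)$ entirely torsion, and reduction at an odd prime of good reduction is injective on all rational torsion by Katz, which is what the paper uses.
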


\begin{proof}
We apply Proposition~\ref{prop:DvH-criterion} to $C=\XN{32}$, $d=9$ and $p=3$. Assumption (1) in Proposition \ref{prop:DvH-criterion} is satisfied by \cite[Appendix]{katz81}. The computation over $\F_3$ proceeds as follows.

First, we construct a covering set $\mathcal T$ of $22606$ effective divisors.  This set covers all effective divisors of degree $9$ by effective divisors of degree at most $18$.  For each $T\in\mathcal T$, we compute the Riemann--Roch space $L(T)$ and search for functions of degree at most $7$ and of degree $9$.  No such functions are found.  Whenever a degree $8$ function is found, we save its pole divisor; this produces $56$ effective divisors occurring as pole divisors of degree $8$ functions.  For each of these $56$ divisors $D$, we verify that
\[
\dim_{\F_3}L(D)=2.
\]
We also verify that if an effective divisor dominates two distinct divisors from this list of $56$, then its degree is at least $12$.  Finally, each of the $56$ divisors is obtained by reducing a $\Q$-rational divisor supported on cusps.

These checks verify the hypotheses of Proposition~\ref{prop:DvH-criterion}; hence $\Q(\XN{32})$ contains no functions of degree $9$.  Since $\JN{32}(\Q)$ has rank zero, it follows from Theorem \ref{thm:DF} that infinitely many degree $9$ points would give a degree $9$ function.  Therefore only finitely many degree $9$ points exist on $\XN{32}$. 
\end{proof}

\subsection{The group $(1,37)$}

We now eliminate the last degree $9$ excess candidate.  This is the only case in which the rank-zero equality between density degree and gonality is unavailable and $\gon_\Q X_1 (m,n)$ is not larger than $18$. We prove this following the strategy of \cite[Proposition 4.9]{DHJO26}.
However, instead of using explicit Weierstrass coordinates for the smooth elliptic quotient, we will have to work with a singular plane model of
\(X_0^+(37)_{\mathbb F_2}\) and with the corresponding places on its normalization, which makes the necessary computations more involved.

\begin{proposition}\label{prop:X37}
The curve $\XN{37}/\Q$ has only finitely many points of degree $9$.
\end{proposition}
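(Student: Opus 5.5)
We argue by contradiction using \Cref{thm:DF}. Suppose $\XN{37}/\Q$ has infinitely many points of degree $9$. Since $\gon_\Q\XN{37}=18>9$, there is no $\Q$-rational map of degree $9$ to $\PP^1$. Hence alternative (2) of \Cref{thm:DF} must hold: $W^0_9(\XN{37})$ contains a translate $x+A$ of a positive-rank abelian subvariety $A\subseteq \JN{37}$.

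The first step is to identify $A$. The Jacobian $\JN{37}$ decomposes up to isogeny into $J_0(37)\simeq 37.a\times 37.b$ and the new part coming from nontrivial characters. Only $37.a$ has positive rank over $\Q$, and the remaining factors have rank zero over $\Q$. Therefore any positive-rank abelian subvariety $A$ defined over $\Q$ contains, after replacing $A$ by a suitable subvariety, the elliptic curve $E\subseteq\JN{37}$. This $E$ is the image of $\Pic^0$ of $X_0^+(37)\simeq 37.a1$ under pullback along the composite $\pi:\XN{37}\to X_0(37)\to X_0^+(37)$. The map $\pi$ has degree $18\cdot 2=36$. Since shrinking $A$ preserves containment in $W^0_9$, it suffices to show the following: no translate of $\pi^*\Pic^0(X_0^+(37))$ (or of an abelian subvariety isogenous to $37.a$, which must equal this image) lies in $W^0_9(\XN{37})$. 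We reduce the problem to $\F_2$ using \Cref{lem:specialization-W}. The curve $\XN{37}$ has good reduction at $2$ and a rational cusp, so a translate $y+E_{\F_2}$ would have to lie in $W^0_9(\XN{37}_{\F_2})$, where $E_{\F_2}$ is the reduction of the closure of $E$, namely $\pi^*\Pic^0(X_0^+(37)_{\F_2})$.

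The second step is the finite computation over $\F_2$. Points of $E_{\F_2}$ are classes $\pi^*(Q-Q_0)$ with $Q\in X_0^+(37)(\overline{\F}_2)$ and $Q_0$ a fixed rational cusp image. The condition $y+E_{\F_2}\subseteq W^0_9$ means that for every $Q$ the divisor class $y+\pi^*(Q-Q_0)$ is effective. In particular, taking $Q$ to range over $X_0^+(37)(\F_{2^k})$ for small $k$ forces $D+\pi^*(Q-Q_0)$ to be linearly equivalent to an effective divisor, where $D$ is any effective degree-$9$ divisor in the class $y$. My plan is to argue as in \cite[Proposition 4.9]{DHJO26}. A Galois/Frobenius argument shows $y$ may be taken in $\Pic^9(\XN{37}_{\F_2})$ up to translation by $E$, or at least over a small extension. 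We then enumerate the finitely many effective degree-$9$ divisor classes over $\F_2$, which are the $\F_2$-points of $W^0_9$; we can list them via the class group computation of \cite{DerickxTerao}. For each candidate $y$, we exhibit one point $Q\in X_0^+(37)(\F_{2^k})$ with $\ell(y+\pi^*(Q-Q_0))=0$, checked by a Riemann--Roch computation. To reduce the search, we can use the orbit of diamond operators, since they preserve $E$ and $W^0_9$.

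The main obstacle is the explicit handling of $X_0^+(37)_{\F_2}$ and the pullback $\pi^*$. Magma's models of $X_0^+(37)$ obtained as quotients are singular plane curves, so the points $Q$ must be taken as places of the normalization, and their pullbacks must be computed as divisors on the function field of $\XN{37}_{\F_2}$ via the inclusion of function fields. A second difficulty is reducing the possible translates $y$ to a finite, rational set. I would try to handle this by noting that $(y+E_{\F_2})\cap \Pic^9(\F_{2^k})$ is nonempty for suitable $k$, since a torsor under an elliptic curve over a finite field has points. This lets us replace $y$ by an $\F_{2^k}$-rational class in $W^0_9$, so only finitely many explicit checks remain.
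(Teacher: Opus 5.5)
Your strategy matches the paper's. The gonality $\gon_\Q \XN{37}=18$ excludes a degree-$9$ map. \Cref{thm:DF} then reduces to a translate of a positive-rank abelian subvariety, which must contain the multiplicity-one factor $E=\pi^*\Pic^0(X_0^+(37))$. \Cref{lem:specialization-W} moves the problem to $\F_2$, and one tests effective degree-$9$ classes against points of $\pi^*\Pic^0(Y)$, where $Y$ is the normalization of a singular plane model of $X_0^+(37)_{\F_2}$. The paper needs only $Q\in Y(\F_2)$: $\#Y(\F_2)=5$, the class $C_0$ of $F_\infty-F_0$ has order $5$ in $\JN{37}(\F_2)$, and for every effective $\F_2$-rational $D$ of degree $9$ some $D+iC_0$ with $1\le i\le 4$ is not effective.

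The step that fails is your reduction to $\F_2$-rational translates. Nothing on the $\F_2$ side forces the translating class $y$ to be rational over a bounded field. Every $y\in\Pic^9(\XN{37}_{\overline{\F}_2})$ is already rational over some $\F_{2^k}$. Lang's theorem for the torsor $y+E_{\F_2}$ only gives a point over the field of definition of that torsor, whose degree you cannot bound. The $\F_2$-scheme of classes $y$ with $y+E_{\F_2}\subseteq W^0_9$ could a priori be nonempty with no $\F_2$-points. Then your $\F_2$-enumeration proves nothing, and enumerating over all $\F_{2^k}$ is not a finite computation.

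The missing idea is that rationality comes from characteristic $0$. In \Cref{thm:DF} (via Faltings), infinitely many degree-$9$ points make $W^0_9(\XN{37})(\Q)$ infinite. So it contains $x+A(\Q)$ with $x\in W^0_9(\XN{37})(\Q)$ and $A$ defined over $\Q$, hence $x+E\subseteq W^0_9$ with $x$ rational. Now take closures over $\Z_{(2)}$, using translation by the section extending $x$ and the fact that the image of $X_1(37)^{(9)}_{\Z_{(2)}}$ is closed. This gives $\bar x+E_{\F_2}\subseteq W^0_9(\XN{37}_{\F_2})$ with $\bar x$ an $\F_2$-point. Because of the rational cusp, $\bar x$ is represented by an effective $\F_2$-rational divisor. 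With that in place your finite check over $\F_2$ is exactly the paper's, and no field extensions are needed.
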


\begin{proof}
Derickx and van Hoeij proved that $\gon_\Q \XN{37}=18$ \cite{derickxVH}.  Hence there is no degree $9$ map $\XN{37}\to \PP^1$ over $\Q$.

It remains, by Theorem~\ref{thm:DF}, to rule out the possibility that $W^0_9(\XN{37})$ contains a translate of a positive-rank abelian subvariety of $J_1(37)$.  A modular-symbol computation of the newform decomposition of $\JN{37}$, or equivalently the corresponding LMFDB newform data \cite{LMFDB}, shows that the only positive-rank simple factor, up to isogeny, is the elliptic factor induced by the quotient map
\[
\pi:X_1(37)\longrightarrow X_0^+(37)\simeq  \lmfdbec{37}{a}{1}.
\]
which has Weierstrass equation
\[
y^2+y=x^3-x.
\]
Thus it suffices to rule out translates of this elliptic curve factor.

We use the LMFDB plane model of \(X_1(37)\), written in Tate normal
form.  Let \(\mathcal E\) denote the universal elliptic curve over
\(\mathbb F_2(X_1(37))\), and let \(P=(0,0)\) be the marked point of
order \(37\).  Put
\[
    j_1=j(\mathcal E),\qquad
    j_2=j(\mathcal E/\langle P\rangle).
\]
The Fricke involution on \(X_0(37)\) interchanges \(j_1\) and \(j_2\).
Hence
\[
    S_{37}=j_1+j_2,\qquad T_{37}=j_1j_2
\]
are functions on \(X_0^+(37)\).  
These functions satisfy the relation
\[
\begin{aligned}
q(S,T)={}&S^{38}+S^{32}T^5+S^{24}T^{13}+S^{14}T^{20}
        +S^8T^{29}+S^8T^{25}  \\
        &+S^2T^{34}+T^{37}+T^{33}=0 .
\end{aligned}
\]
Let \(Y\) be 
the normalization of the plane curve
\(q(S,T)=0\).  We verify that \(g(Y)=1\) and \(
    \#Y(\mathbb F_2)=5,
\)
and from our construction it follows that
\[
    Y\simeq X_0^+(37)_{\mathbb F_2}\simeq 37.a1_{\mathbb F_2}.
\]
The quotient map
\[
    \pi:X_1(37)_{\mathbb F_2}\longrightarrow Y
\]
is induced on function fields by
\[
    S\longmapsto S_{37},\qquad T\longmapsto T_{37}.
\]

The computation uses two rational places of \(Y\).  The first is the
place at infinity cut out by the local functions
\(
    \frac{1}{S}, \frac{T^{36}}{S^{37}},
\)
and the second is the branch above the singular plane point \(S=T=0\)
cut out on the normalization by
\(
    S, \frac{T}{S}+1.
\)
It then computes their pullbacks to
\(X_1(37)_{\mathbb F_2}\). We denote the corresponding pullback divisors on \(X_1(37)_{\mathbb F_2}\)
by \(F_\infty\) and \(F_0\), respectively.

Set
\[
    D_0=F_\infty-F_0\in \operatorname{Div}^0(X_1(37)_{\mathbb F_2}).
\]
and let \(C_0=[D_0]\in J_1(37)(\mathbb F_2)\) be its divisor class. 
The code verifies that
\(
    5C_0 = 0
\)
and that
\(
    i\cdot C_0\not\neq 0  \text{ for } i=1,2,3,4.
\)
Thus \(C_0\) has order \(5\) in
\(J_1(37)(\mathbb F_2)\).  

Since \(X_1(37)\) has an \(\mathbb F_2\)-rational cusp, every
\(\mathbb F_2\)-rational point of \(W^0_9(X_1(37)_{\mathbb F_2})\) is
represented by an effective \(\mathbb F_2\)-rational divisor of degree
\(9\).  We enumerate all such divisors \(D\) and for each divisor check whether the dimension of at least one of the Riemann--Roch spaces
\[
    L(D+D_0),\quad
    L(D+2D_0),\quad
    L(D+3D_0),\quad
    L(D+4D_0)
\]
is zero.  Equivalently, for every effective divisor \(D\) of degree
\(9\) on \(X_1(37)_{\mathbb F_2}\), there is some
\(i\in\{1,2,3,4\}\) such that
\[
    D+iD_0\notin W^0_9(X_1(37)_{\mathbb F_2}).
\]
Therefore no translate of the order-\(5\) subgroup coming from the
reduced elliptic quotient is contained in
\(W^0_9(X_1(37)_{\F_2})\).

By Lemma~\ref{lem:specialization-W}, if
\(W^0_9(X_1(37)_{\mathbb Q})\) contained a translate of the positive-rank
elliptic quotient, then after reduction modulo \(2\) the
special fiber \(W^0_9(X_1(37)_{\F_2})\) would contain the corresponding
translate of the positive-rank elliptic quotient.  The computation shows that this is not the case, so we conclude that \(W^0_9(X_1(37)_{\mathbb Q})\) contains no translate of the positive-rank elliptic factor of \(J_1(37)\) induced by
the quotient \(X_1(37)\to X_0^+(37)\simeq 37.a1\) and hence $\XN{37}$ has only finitely many degree $9$ points over $\Q$.

\end{proof}

\begin{remark}
    In the proof of Proposition \ref{prop:X37} the divisor enumeration was implemented using an amortized
Riemann--Roch computation, following the local-expansion method of
Derickx--Terao \cite{DerickxTerao}.  For each ``degree type" (by which we partition parts of the computation) we choose an effective divisor \(H\) dominating all divisors \(D\) of that type.  Writing \(\delta=\delta_+-\delta_-\), we set
\[
    G=H+\delta_+,\qquad R=H+\delta_- .
\]
Then, for every \(D\leq H\), we get
\(
    D+\delta = G-(R-D).
\)
Thus \(L(D+\delta)\) is obtained from the fixed space \(L(G)\) by imposing
vanishing conditions along \(R-D\).  These conditions are expressed using
local expansions at the closed points in \(\operatorname{Supp}(R)\), and
after expanding residue-field coefficients over the base field they become
linear equations over the finite field.  Hence each membership test
\(D+\delta\in W_d^0(X_1(37))\) reduces to a matrix-rank computation.

This is not just a coding convenience; it is necessary to make the comptuation feasible. Instead of computing millions of separate Riemann–Roch spaces, we compute one larger space and then recover the smaller Riemann-Roch subspaces by linear algebra.
\end{remark}

\section{Proof of Theorem \ref{mainthm}}

We now combine the previous sections.  By Proposition~\ref{prop:candidate-lists}, every group in $\Phiinf(d)$ for $d=7,8,9$ lies in $T_d\cup E_d$.  By Proposition~\ref{prop:Td-included}, every group in $T_d$ belongs to $\Phiinf(d)$.  Corollary~\ref{cor:table-eliminations} eliminates all of $E_7$ and all of $E_8$ except $(2,44)$ and $(2,46)$, which are eliminated by the maps to $\XN{44}$ and $\XN{46}$.  Hence $\Phiinf(7)=T_7$ and $\Phiinf(8)=T_8$.

For $d=9$, Corollary~\ref{cor:table-eliminations}, the known gonalities of $\XN{N}$ for $N\leq 40$, the maps to $\XN{29}$, $\XN{44}$ and $\XN{46}$, the cover search for $N=43,53,57$, the intermediate curve $X_{\DeltaGrp}(63)$, Lemma~\ref{lem:X32}, and Proposition~\ref{prop:X37} eliminate every element of $E_9$.  Hence $\Phiinf(9)=T_9$.  This proves Theorem \ref{mainthm}.

\begin{remark}
The proof for $(1,37)$ is the only place where positive rank enters the degree $9$ elimination.  The gonality computation gives only $\delta(\XN{37}/\Q)\geq 9$, so the specialization argument on $W^0_9(X_1(37))$ is needed to rule out infinite degree $9$ points.
\end{remark}

\begin{remark}
    All of our computations were performed on the Mordell server at the Department of Mathematics, University of Zagreb with a AMD Epyc 9175F CPU running at 4.20GHz and with 384 GB of RAM. The total wall-clock time for the computations used in this paper was
    slightly more than 115 hours.
\end{remark}

\bibliographystyle{alpha}
\bibliography{bibliography1}
\end{document}